\newtheorem{theorem}{Theorem}[section]
\newtheorem{lemma}[theorem]{Lemma}
\newtheorem{proposition}[theorem]{Proposition}
\newtheorem{corollary}[theorem]{Corollary}
\theoremstyle{definition}
\newtheorem{definition}{Definition}[section]
\newtheorem{remark}[definition]{Remark}
\newcommand{\C}{\mathbb{C}}
\newcommand{\R}{\mathbb{R}}
\newcommand{\K}{\mathbb{K}}
\title{Generalizations of the Matching Polynomial to the Multivariate Independence Polynomial}
\author{Jonathan Leake \\ Nick Ryder
\footnote{This research was supported by NSF Grant CCF-155375} \\}
\begin{document}

\maketitle

\begin{abstract}
We generalize two main theorems of matching polynomials of undirected simple graphs, namely, real-rootedness and the Heilmann-Lieb root bound. Viewing the matching polynomial of a graph $G$ as the independence polynomial of the line graph of $G$, we determine conditions for the extension of these theorems to the independence polynomial of any graph. In particular, we show that a stability-like property of the multivariate independence polynomial characterizes claw-freeness. Finally, we give and extend multivariate versions of Godsil's theorems on the divisibility of matching polynomials of trees related to $G$.
\end{abstract}

\section{Introduction}
Given a graph $G = (V,E)$, the matching polynomial of $G$ and the independence polynomial of $G$ are defined as follows.
\[
\mu(G) := \sum_{\substack{M \subset E \\ M,\text{matching}}} \left(-x^2\right)^{|M|} ~~~~~~~~~~~ I(G) := \sum_{\substack{S \subset V \\ S,\text{independent}}} x^{|S|}
\]
The real-rootedness of the matching polynomial and the Heilmann-Lieb root bound are important results in the theory of undirected simple graphs. In particular, real-rootedness implies log-concavity and unimodality of the matchings of a graph, and recently in \cite{ramanujan} the root bound was used to show the existence of Ramanujan graphs. Additionally, it is well-known that the matching polynomial of a graph $G$ is equal to the independence polynomial of the line graph of $G$. With this, one obtains the same results for the independence polynomials of line graphs. This then leads to a natural question: what properties extend to the independence polynomials of all graphs?

Generalization of these results to the independence polynomial has been partially successful. About a decade ago, Chudnovsky and Seymour \cite{CS} established the real-rootedness of the independence polynomial for claw-free graphs. (The independence polynomial of the claw is not real-rooted.) A general root bound for the independence polynomial was also given by \cite{rootbounds}, though it is weaker than that of Heilmann and Lieb. As with the original results, these generalizations are proven using univariate polynomial techniques.

More recently, Borcea and Br{\"a}nd{\'e}n used their characterization of stability-preserving operators \cite{bb1}, \cite{bb2} to give a simple and intuitive proof of the real-rootedness of the matching polynomial. To that end, they use the following multivariate generalization of real rootedness.

\begin{definition}
Let $\mathcal{H}_+$ denote the open upper half-plane of $\C$. A polynomial $p \in \C[z_1,...,z_n]$ is said to be \emph{stable} if either $(z_1,...,z_n) \in \mathcal{H}_+^n$ implies $p(z_1,...,z_n) \neq 0$. Additionally, if $p \in \R[z_1,...,z_n]$, we say that $p$ is \emph{real stable}. (For simplicity, we also say that the zero polynomial is both stable and real stable.) Notice that a univariate polynomial is real stable iff it is real-rooted.
\end{definition}

Borcea and Br{\"a}nd{\'e}n then prove something much stronger than real-rootedness: they actually show that the \emph{multivariate} matching polynomial is real stable. Beyond its surprising simplicity, their proof also suggests that the multivariate approach may be the more natural one. That said, the first part of this paper is a partial generalization of this stability result to the multivariate independence polynomial of claw-free graphs. In particular, we prove a result related to the real-rootedness of certain weighted independence polynomials. This result was originally proven by Engstr\"om in \cite{weighted} by bootstrapping the Chudnovsky and Seymour result for rational weights and using density arguments. The proof we give here is completely self contained and implies both the original Chudnovsky and Seymour result as well as the weighted generalization. By using a multivariate framework to directly prove the more general result, we obtain a simple inductive proof which we believe better captures the underlying structure.
 

In addition, the full importance of the claw (3-star) graph is not immediately clear from the univariate framework. Since the result of Chudnovsky and Seymour, there have been attempts to explain more conceptually why the claw-free premise is needed for real-rootedness. In particular, some graphs containing claws actually have real-rooted independence polynomials, disproving the converse to the univariate result. On the other hand, the stronger stability-like property we use here turns out to be equivalent to claw-freeness, yielding a satisfactory converse.

In the second part of this paper, we then extend the Heilmann-Lieb root bound by generalizing some of Godsil's work on the matching polynomial. In \cite{godsil}, Godsil demonstrated the real-rootedness of the matching polynomial of a graph by showing that it divides the matching polynomial of a related tree. (For a tree, root properties are more easily derived.) We prove a similar result for the multivariate matching polynomial, and then we determine conditions for which these divisibility results extend to the multivariate independence polynomial. Further, we prove the Heilmann-Lieb root bound for the independence polynomial of a certain subclass of claw-free graphs. By considering a particular graph called the Schläfli graph, we demonstrate that this root bound does not hold for all claw-free graphs and provide a weaker bound in the general claw-free case.



\section{Stability Theory}
Before the graph theoretic results, we give a bit of background on stability theory. We then generalize the typical notion of stability in a way that gives a natural extension of the matching polynomial stability result.

In what follows, let $\mathcal{H}_+$ denote the open upper half-plane of $\C$, let $\R_+$ denote the nonnegative real numbers, and let $\K$ denote a field, either $\R$ or $\C$. For $p \in \K[z_1,...,z_n]$ and $\mathbf{t} = (t_1,...,t_n) \in \K^n$, define $p(\mathbf{t}z) := p(t_1z,t_2z,...,t_nz)$, which is a univariate polynomial. For $\mathbf{t} = (t_1,...,t_n) \in \K^n$ and $k \in [n] := \{1,...,n\}$, let $(t_1,...,\hat{t}_k,...,t_n)$ denote the vector in $\K^{n-1}$ which is the vector $\mathbf{t}$ with the $k^\text{th}$ element removed. Also, for all $k$ we use the shorthand $\partial_{z_k} := \frac{\partial}{\partial z_k}$.

\subsection{Interlacing}

The notion of interlacing polynomials is intimately related to the theory of stable polynomials. That said, we now define this notion and state a few of its important properties.

\begin{definition}
Let $p,q \in \R[z]$ be real-rooted polynomials given by $p(z) = C_1 \prod_{k=1}^n (z - \lambda_k)$ and $q(z) = C_2 \prod_{k=1}^m (z - \gamma_k)$, where $n$ and $m$ differ by at most 1 and $m \leq n$. We write $q \ll p$, or say $q$ \emph{interlaces} $p$, if $\lambda_1 \geq \gamma_1 \geq \lambda_2 \geq \gamma_2 \geq \cdots$ and $C_1 \cdot C_2 > 0$. If the roots alternate in the same way but $C_1 \cdot C_2 < 0$, we swap the order of $p$ and $q$ in this relation.
\end{definition}

\begin{definition}
Let $p_1,p_2,...,p_m \in \R[z]$ be real-rooted polynomials with positive leading coefficients. We say that $p_1,p_2,...,p_m$ \emph{have a common interlacing}, if there exists $f \in \R[z]$ such that $f \ll p_k$ for all $k \in [m]$.
\end{definition}

Notice in the above definition that the connotation of ``$\ll$'' as an order symbol presents itself in the fact that the ``larger'' polynomial has a larger maximum root (when $C_1 \cdot C_2 > 0$). However, $\ll$ is not a partial order.

The next result gives a link between the concept of interlacing and the roots of linear combinations of polynomials. It is typically attributed to Obreshkoff, but can be viewed as a reformulation of the Hermite-Biehler theorem.

\begin{proposition}[Obreshkoff's Theorem]
For $p,q \in \R[z]$ with real roots, $\alpha p + \beta q$ is real-rooted for all $\alpha,\beta \in \R$ iff $p \ll q$ or $q \ll p$.
\end{proposition}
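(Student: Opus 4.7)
The plan is to leverage Theorem \ref{thm:alg_hb} by repackaging the univariate pencil $\{\alpha p + \beta q : \alpha, \beta \in \R\}$ as a single bivariate polynomial $P(z_1, z_2) := p(z_1) + z_2 q(z_1)$; for $n=1$, Theorem \ref{thm:alg_hb} says that real stability of $P$ is equivalent to $q(tz+y)$ interlacing $p(tz+y)$ for every $t > 0,\, y \in \R$, and in particular (at $t = 1$, $y = 0$) to $q \ll p$.

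For the ($\Leftarrow$) direction, assume WLOG $q \ll p$. Then Theorem \ref{thm:alg_hb} immediately yields real stability of $P$. Applying the real-specialization closure property (iv) by setting $z_2 := \beta/\alpha$ shows $p + (\beta/\alpha) q$ is real-rooted for every $\beta/\alpha \in \R$, hence $\alpha p + \beta q$ is real-rooted whenever $\alpha \neq 0$; the case $\alpha = 0$ is immediate from $q$ being real-rooted.

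For the ($\Rightarrow$) direction, setting $(\alpha,\beta) = (1,0)$ and $(0,1)$ shows $p$ and $q$ are both real-rooted, and after dividing out $\gcd(p,q)$ I may assume $p$ and $q$ share no zeros. The goal is to show $P$ is real stable, for then Theorem \ref{thm:alg_hb} delivers $q \ll p$. Consider the rational function $r := p/q$; since its only poles are the (real) roots of $q$, $r$ is holomorphic on $\mathcal{H}_+$. The crucial observation is that $r(\mathcal{H}_+) \cap \R = \emptyset$, for otherwise $r(z_0) = t \in \R$ for some $z_0 \in \mathcal{H}_+$ would make $z_0$ a non-real root of $p - tq$, contradicting the hypothesis. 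By connectedness of $\mathcal{H}_+$, the image $r(\mathcal{H}_+)$ lies entirely in $\mathcal{H}_+$ or entirely in $\mathcal{H}_-$; the latter case reduces to the former after replacing $q$ with $-q$, which changes neither the hypothesis nor the roots of $q$, and hence not the interlacing conclusion. Then for $(z_1, z_2) \in \mathcal{H}_+^2$ we factor $P(z_1, z_2) = q(z_1) \bigl( r(z_1) + z_2 \bigr)$, where $q(z_1) \neq 0$ (as $q$ is real-rooted) and $r(z_1) + z_2 \in \mathcal{H}_+$, so $P(z_1, z_2) \neq 0$ as required.

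The main obstacle is the reverse direction's leap from the purely real hypothesis (real-rootedness along the pencil) to global control of $r$ on the whole upper half-plane; connectedness of $\mathcal{H}_+$ is what lets me make a clean once-and-for-all reduction to one of the two interlacing orientations, and a small amount of bookkeeping (coprime reduction, choosing the sign of $q$) is needed to present the conclusion symmetrically in $p$ and $q$. Everything else is routine use of Theorem \ref{thm:alg_hb} and the closure properties.
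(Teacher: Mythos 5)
The paper states Obreshkoff's theorem without proof, citing it as classical and ``a reformulation of the Hermite--Biehler theorem,'' so there is no in-paper argument to compare against; your job here is to supply the missing proof, and on the whole you do so correctly and cleanly. The forward direction, in particular, is nice: the observation that real-rootedness along the whole pencil forces $r = p/q$ to carry $\mathcal{H}_+$ into $\C \setminus \R$, followed by a connectedness argument to place $r(\mathcal{H}_+)$ entirely in one open half-plane, is exactly the Nevanlinna/Herglotz-function heart of Hermite--Biehler, and factoring $P(z_1,z_2) = q(z_1)\bigl(r(z_1) + z_2\bigr)$ then makes the real stability of $P$ immediate. Your reduction to $\gcd(p,q) = 1$ is harmless since a common real-rooted factor can be stripped from the pencil and reattached to the interlacing conclusion, and the degenerate cases ($q$ constant, $r$ constant) cause no trouble.

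There is one wrinkle you should flag, though it is arguably a wrinkle in the paper's own Theorem~\ref{thm:alg_hb} rather than in your argument. The paper's definition of $q \ll p$ constrains only the root locations, not the signs of the leading coefficients. But ``$p + z_{n+1}q$ real stable'' is \emph{not} invariant under $q \mapsto -q$, while ``$q \ll p$'' in the paper's sense \emph{is}; for instance $p = z$, $q = -1$ satisfies $q \ll p$ yet $z_1 - z_2$ is not real stable. So the correct form of Theorem~\ref{thm:alg_hb} (as in Borcea--Br\"and\'en's ``proper position'') needs a sign/Wronskian condition on top of interlacing. In your $(\Rightarrow)$ direction you already account for exactly this, via the WLOG $q \mapsto -q$ licensed by connectedness. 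But in your $(\Leftarrow)$ direction you assert that $q \ll p$ alone implies $P$ is real stable, which is false as stated. The repair is the same one you already use: since $\alpha p + \beta q$ and $\alpha p + \beta(-q)$ sweep out the same set of polynomials, you may replace $q$ by $-q$ so that the pair is in proper position, and only then invoke the Hermite--Biehler/Theorem~\ref{thm:alg_hb} implication to conclude $P$ is real stable. Adding that one sentence closes the gap and makes the argument fully correct and symmetric with the $(\Rightarrow)$ side.
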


To generalize Obreshkoff's Theorem to many polynomials, Chudnovsky and Seymour make the following definition and prove the following equivalence.

\begin{definition}
We say that $p_1, \ldots, p_m \in \R[z]$ are \emph{compatible} if all convex combinations are real rooted.
\end{definition}



\begin{theorem}[\cite{CS}]\label{thm:cs_compat}
Let $p_1, \ldots, p_k \in \R[z]$ be polynomials with positive leading coefficients. The following are equivalent.
\begin{enumerate}
    \item $p_i$ and $p_j$ are compatible for all $i \neq j$.
    \item $p_i$ and $p_j$ have a common interlacing for all $i \neq j$.
    \item $p_1,...,p_k$ are compatible.
    \item $p_1,...,p_k$ have a common interlacing.
\end{enumerate}
\end{theorem}


\subsection{Real Stability}
We now give a condition which is equivalent to the notion of stability defined above.

\begin{proposition}[\cite{bb1}, Lemma 1.5] \label{thm:stab_equiv}
A polynomial $p \in \K[z_1,...,z_n]$ for $\K = \C$ (resp. $\K = \R$) is stable (resp. real stable) iff for every $\mathbf{t} \in \R_+^n$ and every $\mathbf{y} \in \R^n$, the univariate restriction $p(\mathbf{t}z + \mathbf{y})$ is stable. Note that if $\K = \R$, the univariate restrictions will be real-rooted.
\end{proposition}

We give this equivalent condition to emphasize the sense in which we generalize stability in the next section. As will be seen, this generalization turns out to work well with the multivariate independence polynomial. Before this though, we give a bit more stability theory. The following will also serve as a base for generalization in the next section.

\begin{proposition}[Closure Properties]
Let $p,q \in \K[z_1,...,z_n]$ be stable (resp. real stable) polynomials, and fix $k \in [n]$. Then the following are also stable (resp. real stable).
\begin{enumerate}[(i)]
    \item $p \cdot q$ (product)
    \item $\partial_{z_k}p$ (differentiation)
    \item $z_k\partial_{z_k}p$ (degree-preserving differentiation)
    \item $p(z_1,...,z_{k-1},r,z_{k+1},...,z_n)$, for $r \in \R$ (real specialization)
    \item $p(z_1,...,z_{k-1},z_1,z_{k+1},...,z_n)$ (projection)
    \item $z_k^{\text{deg}_k(p)} p(z_1,...,z_{k-1},-z_k^{-1},z_{k+1},...,z_n)$ (inversion)
\end{enumerate}
Here, $\text{deg}_k(p)$ is the degree of $z_k$ in $p$.
\end{proposition}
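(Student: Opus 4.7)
The plan is to handle the six closure properties in roughly increasing order of difficulty. Three of them (product, projection, inversion) follow almost directly from the definition by substitution; the two differentiation statements reduce to a Gauss-Lucas argument on univariate restrictions; and real specialization is the one genuine limit argument, which I would handle via Hurwitz's theorem. The real stable case needs no separate treatment since each operation preserves realness of coefficients, and a real stable polynomial is exactly a real polynomial that is stable.

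For (i), (v), and (vi), I would fix $\mathbf{w} = (w_1, \ldots, w_n) \in \mathcal{H}_+^n$ and show the relevant polynomial is nonzero at $\mathbf{w}$. The product case is immediate. For the projection, evaluating at $(w_1, \ldots, w_{k-1}, w_{k+1}, \ldots, w_n) \in \mathcal{H}_+^{n-1}$ amounts to evaluating $p$ at the $n$-tuple with $w_1$ in both the $1$st and $k$th slots, which still lies in $\mathcal{H}_+^n$. For the inversion, the key observation is that $w \mapsto -1/w$ maps $\mathcal{H}_+$ into itself, so plugging $\mathbf{w}$ into the inverted polynomial equals $w_k^{\deg_k p}$ (nonzero) times $p$ evaluated at a point of $\mathcal{H}_+^n$. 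In each of the three cases, the same $\mathbf{w}$-substitution simultaneously rules out the identically-zero degeneracy when $p, q \not\equiv 0$.

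For (ii), I would fix $\mathbf{w} \in \mathcal{H}_+^n$ and consider the univariate restriction $f(z) := p(w_1, \ldots, w_{k-1}, z, w_{k+1}, \ldots, w_n)$. Stability of $p$ forces $f$ to be nonvanishing on $\mathcal{H}_+$, and in particular $f \not\equiv 0$, so every root of $f$ lies in the closed lower half-plane. By Gauss-Lucas, the roots of $f'$ lie in the convex hull of those of $f$, hence still in the closed lower half-plane. Therefore $(\partial_{z_k} p)(\mathbf{w}) = f'(w_k) \neq 0$. Property (iii) then follows immediately from (ii) combined with (i), since $z_k$ itself is a stable polynomial.

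For (iv), my plan is to approach the real specialization by complex perturbation. For each $\epsilon > 0$, substituting $z_k = r + i\epsilon$ with $r \in \R$ into $p$ yields a polynomial stable in the remaining variables, since any $\mathcal{H}_+^{n-1}$-evaluation extends to an evaluation of $p$ on $\mathcal{H}_+^n$. These polynomials converge coefficient-wise, hence locally uniformly, to $p(z_1, \ldots, z_{k-1}, r, z_{k+1}, \ldots, z_n)$ as $\epsilon \to 0^+$. The multivariate Hurwitz theorem, applied on the connected open set $\mathcal{H}_+^{n-1}$, then forces the limit to be either identically zero or nonvanishing, which is exactly stability. This step is the main obstacle, since it is the only one where the zero polynomial can legitimately appear in the limit, and the argument must accommodate that degeneration via a genuine analytic tool rather than a purely algebraic substitution.
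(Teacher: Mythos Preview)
The paper states this proposition without proof, as standard background on stability theory; there is no argument in the paper to compare against. Your treatment is the standard one and is essentially correct.

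The one point that deserves more care is (ii). Your Gauss--Lucas step tacitly assumes that the univariate restriction $f(z)=p(w_1,\ldots,z,\ldots,w_n)$ is nonconstant; if $f$ happened to be a nonzero constant you would have $f'\equiv 0$ and the conclusion $f'(w_k)\neq 0$ would fail at that $\mathbf{w}$. This degeneration in fact cannot occur when $d:=\deg_k p\geq 1$: writing $p=\sum_{j=0}^{d} a_j(z')\,z_k^{\,j}$, the leading coefficient $a_d$ is itself stable in the remaining variables, being the Hurwitz limit $\lim_{t\to\infty}(it)^{-d}\,p(z_1,\ldots,it,\ldots,z_n)$ of polynomials obtained by specializing $z_k$ into $\mathcal{H}_+$. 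Hence $a_d$ does not vanish on $\mathcal{H}_+^{\,n-1}$, so $f$ always has full degree $d\geq 1$ and Gauss--Lucas applies. With that sentence added, your argument for (ii) is complete, and your treatment of (i), (iii)--(vi), and the real-coefficient remark is fine as written.
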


The next result is a stability equivalence theorem of Borcea and Br{\"a}nd{\'e}n, which is essentially a generalization of the Hermite-Biehler theorem. It is the inspiration for the main theorem of the next section.

\begin{theorem}[\cite{bb1}, Lemma 1.8] \label{thm:alg_hb}
For $p,q \in \R[z_1,...,z_n]$, $p + z_{n+1}q$ is real stable iff for every $\mathbf{t} \in \R_+^n$ and every $\mathbf{y} \in \R^n$, we have that $q(\mathbf{t}z + \mathbf{y})$ interlaces $p(\mathbf{t}z + \mathbf{y})$.
\end{theorem}

Finally, we give an equivalent condition for real stability of multi-affine polynomials. This will be a useful result for demonstrating counterexamples to real-rootedness.

\begin{definition}
A polynomial $p \in \K[z_1,...,z_n]$ is said to be \emph{multi-affine} if it is of degree at most one in each variable.
\end{definition}

\begin{proposition}[\cite{strongrayleigh}]
A multi-affine polynomial $p \in \R[z_1,...,z_n]$ is real stable iff $p$ is strongly Rayleigh. That is, iff for every $j \neq k \in [n]$ and every $\mathbf{x} \in \R^n$, we have the following.
\[
(\partial_{z_j}p)(\mathbf{x}) \cdot (\partial_{z_k}p)(\mathbf{x}) \geq (\partial_{z_j}\partial_{z_k}p)(\mathbf{x}) \cdot p(\mathbf{x})
\]
\end{proposition}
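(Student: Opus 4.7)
I would reduce both implications to the bivariate case. The forward direction is then immediate from the closure properties, and the converse will need a short induction combined with Theorem~\ref{thm:alg_hb}.

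For the forward direction, fix $j \neq k \in [n]$ and $\mathbf{x} \in \R^n$, and specialize $z_\ell = x_\ell$ for every $\ell \notin \{j,k\}$. By closure of real stability under real specialization (from the Closure Properties proposition), the resulting bivariate multi-affine polynomial $q(z_j, z_k) = A + Bz_j + Cz_k + Dz_jz_k$, with $A,B,C,D \in \R$, is real stable. Since partial derivatives commute with this specialization, the Rayleigh quantity for $p$ at $\mathbf{x}$ equals the Rayleigh quantity for $q$ at $(x_j, x_k)$, which a direct expansion shows equals $BC - AD$ (all first-order terms in $x_j, x_k$ cancel). It therefore suffices to prove: \emph{a real bivariate multi-affine polynomial $q$ is real stable iff $BC - AD \geq 0$}. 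For this, solve $q = 0$ for $z_j$ to obtain the real Möbius transformation $z_k \mapsto -(A + Cz_k)/(B + Dz_k)$, whose determinant is $AD - BC$. Since it sends $\R \cup \{\infty\}$ to itself, it maps $\mathcal{H}_+$ onto $\mathcal{H}_+$ when the determinant is positive and onto $\mathcal{H}_-$ when negative; stability of $q$ thus forces $AD - BC \leq 0$. The degenerate subcases $D = 0$ and $AD = BC$ are checked directly and give the same condition.

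For the converse, I would induct on $n$, with the bivariate case just established as the base. Write $p = A + z_n B$ with $A, B \in \R[z_1, \ldots, z_{n-1}]$ multi-affine. Both $A = p|_{z_n=0}$ and $B = \partial_{z_n} p$ inherit the strong Rayleigh property: for $A$ this follows by setting $x_n = 0$ in the Rayleigh inequality for $p$, and for $B$ by viewing the Rayleigh inequality for a pair $(j, k) \subset [n-1]$ as a nonnegative quadratic in $x_n$ whose leading coefficient is exactly the Rayleigh quantity for $B$. By induction, $A$ and $B$ are real stable. By Theorem~\ref{thm:alg_hb}, real stability of $p = A + z_n B$ then reduces to showing that $\tilde{B}(z) := B(\mathbf{t}z + \mathbf{y})$ interlaces $\tilde{A}(z) := A(\mathbf{t}z + \mathbf{y})$ for every $\mathbf{t} \in \R_+^{n-1}$ and $\mathbf{y} \in \R^{n-1}$. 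The Rayleigh inequality for $p$ at the pair $(j, n)$ collapses (the $x_n$-dependent terms cancel) to $B \partial_j A - A \partial_j B \geq 0$ on $\R^{n-1}$ for each $j$; multiplying by $t_j > 0$ and summing produces the Wronskian inequality $\tilde{B}\tilde{A}' - \tilde{A}\tilde{B}' \geq 0$, which for real-rooted $\tilde{A}, \tilde{B}$ is the condition defining $\tilde{B} \ll \tilde{A}$.

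\textbf{Main obstacle.} The hard part is the converse, and within it the final step of translating the pointwise Wronskian inequality into the formal interlacing from the paper's definition: one must verify that $\deg \tilde{A}$ and $\deg \tilde{B}$ satisfy the ordering and differ by at most one, or else show the interlacing holds vacuously. Handling degree drops along the ray, the case $\tilde{B} \equiv 0$, and common roots of $\tilde{A}, \tilde{B}$ all require a small case analysis, but none is serious: in every case the inductive hypothesis or a trivial observation closes the gap. The substantive analytic input --- positivity of the Wronskian on $\R$ --- is precisely what the strong Rayleigh condition supplies.
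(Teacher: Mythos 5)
The paper cites this proposition without proof, so there is no internal argument to compare against; I assess your proof on its own terms.

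Your forward direction is correct and clean: the reduction to a bivariate polynomial $q = A + Bz_j + Cz_k + Dz_jz_k$ via real specialization, the computation that the Rayleigh quantity of $q$ is the constant $BC - AD$, and the M\"obius-determinant argument for the bivariate equivalence all check out (including the degenerate cases $D = 0$ and $AD = BC$). The inheritance of the strong Rayleigh inequality by $A = p|_{z_n = 0}$ and $B = \partial_{z_n}p$, via evaluation at $x_n = 0$ and via the leading coefficient of the quadratic in $x_n$, is also correct, and the derivation of $B\,\partial_j A - A\,\partial_j B \geq 0$ from the $(j,n)$ pair and hence $\tilde{B}\tilde{A}' - \tilde{A}\tilde{B}' \geq 0$ along rays is fine.

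The gap is at the step you yourself flag as the ``main obstacle'' and then dismiss. You assert that the Wronskian inequality $\tilde{B}\tilde{A}' - \tilde{A}\tilde{B}' \geq 0$ on $\R$, together with real-rootedness of $\tilde{A}$ and $\tilde{B}$, ``is the condition defining $\tilde{B} \ll \tilde{A}$.'' It is not, absent a priori degree control. Take $\tilde{A}(z) = z^3$ and $\tilde{B}(z) = 1$: both real-rooted, $\tilde{B}\tilde{A}' - \tilde{A}\tilde{B}' = 3z^2 \geq 0$ on $\R$, yet $\tilde{B}$ does not interlace $\tilde{A}$ under the paper's definition, and indeed $\tilde{A}(z) + z_n\tilde{B}(z) = z^3 + z_n$ vanishes at $(z,z_n) = (i,i)$ so is not real stable. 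Thus the passage from a one-signed Wronskian to interlacing genuinely requires that $\deg\tilde{A}$ and $\deg\tilde{B}$ differ by at most one with $\deg\tilde{B} \leq \deg\tilde{A}$, and establishing this for the $A$, $B$ arising from a strongly Rayleigh multi-affine $p$ is not ``a trivial observation''; it is a nontrivial support/degree constraint which is essentially part of the content of the theorem, so assuming it risks circularity. To actually close this, one standard route is to perturb $p$ by $\epsilon$ times a real stable polynomial (e.g.\ $\prod_k(1 + z_k)$) to force generic degrees, verify the perturbed polynomial is still strongly Rayleigh for small $\epsilon$, apply your interlacing argument there, and pass to the limit via Hurwitz; another is to argue directly in $\mathcal{H}_+$ that $\mathrm{Im}(A(\mathbf{z}')/B(\mathbf{z}')) \geq 0$ using the inductive stability of $A$, $B$ and the Wronskian inequality, handling the pole structure carefully. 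As written, the converse is incomplete at precisely this point.
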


\subsection{Same-phase Stability}

We now introduce a new notion of stability. Notice that the connection between the following conditions is similar to that which is given by Proposition \ref{thm:stab_equiv}.

\begin{definition}
A polynomial $p \in \R[z_1,...,z_n]$ is said to be \emph{same-phase stable} if one of the following equivalent conditions is satisfied.
\begin{enumerate}[(i)]
    \item For every $\mathbf{t} \in \R_+^n$, the univariate restriction $p(\mathbf{t}z)$ is stable (and therefore real rooted).
    \item If $\arg(z_1) = \arg(z_2) = \cdots = \arg(z_n)$, then $p(z_1,...,z_n) = 0$ implies $z_k \not\in \mathcal{H}_+$ for some $k$.
\end{enumerate}
We will primarily make use of condition (i).
\end{definition}

This notion is strictly weaker than that of ``stable'', and it will serve as the basic concept in what follows (as stability and real stability did in the previous section). Next, we define a notion of compatibility for real same-phase stable polynomials, which is similar to that of Chudnovsky and Seymour in \cite{CS}.

\begin{definition}
Polynomials $p_1,...,p_m \in \R_+[z_1,...,z_n]$ with nonnegative coefficients are said to be \emph{same-phase compatible} if $p_k$ is same-phase stable for all $k$, and the polynomials $\{p_k(\mathbf{t}z)\}_{k=1}^m$ are compatible for each $\mathbf{t} \in \R_+^n$. Note that by Theorem \ref{thm:cs_compat}, we could instead require $\{p_k(\mathbf{t}z)\}_{k=1}^m$ have a common interlacing for each $\mathbf{t} \in \R_+^n$.
\end{definition}

\begin{remark}
In order to utilize the theory of interlacing and compatible polynomials, we need to assume that the polynomials we are using have nonnegative coefficients. This is because results like Theorem \ref{thm:cs_compat} no longer hold if negative or complex coefficients are allowed. That said, this restriction is not required to define same-phase stable polynomials, and many other properties also hold without it.
\end{remark}

We now can apply Chudnovsky and Seymour's equivalence result (Theorem \ref{thm:cs_compat}) to get the following:

\begin{corollary} \label{cor:pw_samephase_compat}
Let $p_1, \ldots, p_k \in \R_+[z_1,...,z_n]$ be polynomials with nonnegative coefficients. The following are equivalent.
\begin{enumerate}
    \item $p_i$ and $p_j$ are same-phase compatible for all $i \neq j$.
    \item $p_1,...,p_k$ are same-phase compatible.
\end{enumerate}
\end{corollary}




\subsection{Same-phase Stability for Multi-affine Polynomials}

We now begin to develop a general theory of same-phase stability for multi-affine real polynomials. This class of polynomials is of particular importance here, as most multivariate graph polynomials are real and multi-affine. We start by giving some basic closure properties.

\begin{proposition}[Closure Properties]
Let $p \in \R[z_1,...,z_n]$ and $q \in \R[w_1,...,w_m]$ be multi-affine same-phase stable polynomials, and fix $k \in [n]$. Then the following are also multi-affine same-phase stable. Note that if in addition $p$ and $q$ have nonnegative coefficients, then the following do as well.
\begin{enumerate}[(i)]
    \item $p \cdot q$ (disjoint product)
    \item $\partial_{z_k}p$ (differentiation)
    \item $z_k\partial_{z_k}p$ (variable selection)
    \item $p(z_1,...,z_{k-1},0,z_{k+1},...,z_n)$ (variable deselection)
    \item $z_1z_2 \cdots z_n p(z_1^{-1},...,z_n^{-1})$ (selection inversion)
\end{enumerate}
\end{proposition}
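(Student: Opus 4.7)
The plan is to fix $\mathbf{t}\in\R_+^n$ (or $\R_+^{n-1}$) throughout and reduce each property to the real-rootedness of a univariate restriction, via condition (i) of the definition of same-phase stability. The multi-affine hypothesis lets me write $p = z_k A + B$ with $A = \partial_{z_k} p$ and $B = p(z_1,\dots,z_{k-1},0,z_{k+1},\dots,z_n)$, both multi-affine polynomials independent of $z_k$; this single decomposition will drive (ii), (iii), and (iv). Multi-affinity of each resulting polynomial is immediate from the form of each operation, so I would focus exclusively on same-phase stability.

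First I would dispatch the easy cases. For (i), since the variables of $p$ and $q$ are disjoint, the restriction factors as $(pq)((\mathbf{t},\mathbf{s})z) = p(\mathbf{t}z)\cdot q(\mathbf{s}z)$, a product of two real-rooted univariate polynomials. For (iii), because $\partial_{z_k} p$ does not involve $z_k$, the identity $(z_k\partial_{z_k}p)(\mathbf{t}z) = t_k z\cdot (\partial_{z_k}p)(\mathbf{t}z)$ reduces (iii) to (ii).

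For (ii) and (iv), I would introduce, for any $\hat{\mathbf{t}}\in\R_+^{n-1}$ (the vector $\mathbf{t}$ with the $k$-th entry deleted), the one-parameter family
\[
P_\alpha(z) \;:=\; \alpha\, z\cdot A(\hat{\mathbf{t}}z) \;+\; B(\hat{\mathbf{t}}z), \qquad \alpha>0,
\]
which is exactly $p(\mathbf{t}z)$ with $t_k=\alpha$. Same-phase stability of $p$ guarantees that $P_\alpha$ is real-rooted for every $\alpha>0$. Dividing by $\alpha$ and sending $\alpha\to\infty$ produces $zA(\hat{\mathbf{t}}z)$ as a coefficient-wise limit, while sending $\alpha\to 0^+$ produces $B(\hat{\mathbf{t}}z)$. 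Both limit polynomials have uniformly bounded degree, so Hurwitz's theorem, applied separately in $\mathcal{H}_+$ and in the conjugate lower half-plane, forbids zeros from migrating off the real line; both limits are therefore real-rooted, yielding (ii) and (iv).

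Finally, for (v), expanding $p = \sum_{S}c_S \prod_{i\in S} z_i$ shows that $\tilde{p} := z_1\cdots z_n\, p(z_1^{-1},\ldots,z_n^{-1}) = \sum_S c_S \prod_{i\notin S} z_i$ is multi-affine, and the restriction $\tilde{p}(\mathbf{t}z) = (t_1\cdots t_n)\, z^n\, p(\mathbf{t}^{-1}z^{-1})$ is (up to the positive factor $t_1\cdots t_n$) the reversal of the real-rooted polynomial $p(\mathbf{t}^{-1}z)$; since reversal inverts nonzero roots, it preserves real-rootedness. The only delicate moment in the whole argument is the limiting step in (ii)/(iv): the degree of $P_\alpha$ can drop under the $\alpha\to 0^+$ limit, but the Hurwitz argument on each open half-plane is robust to such degree collapse, so this is a mild rather than a substantive obstacle.
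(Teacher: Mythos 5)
Your proof is correct and follows essentially the same route as the paper: fix the restriction direction, decompose $p = z_k\,\partial_{z_k}p + p|_{z_k=0}$, and obtain (ii) and (iv) as limits of $p(\mathbf{t}z)$ as $t_k\to\infty$ (after dividing by $t_k$) and $t_k\to 0^+$, respectively, with (iii) reducing to (ii) and (v) by root inversion of a univariate restriction. The only difference is cosmetic: the paper simply says ``by continuity'' where you spell out the Hurwitz argument and note the robustness to degree collapse, which is a slightly more careful rendering of the same idea.
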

\begin{proof}
$(i)$ Straightforward.

$(ii)$ Fix $\mathbf{t} \in \R_+^n$, letting $t_k$ vary. Also, define $\mathbf{t}_0 := (t_1,...,\hat{t}_k,...,t_n)$. So, $p(\mathbf{t}z)$ is real-rooted for any $t_k \in \R_+$. By Hurwitz's theorem,
\[
(\partial_{z_k}p)(\mathbf{t}_0z) = \lim_{t_k \rightarrow \infty} t_k^{-1}p(\mathbf{t}z)
\]
is also real-rooted. So, $\partial_{z_k}p$ is same-phase stable.

$(iii)$ This follows from $(i)$, since $(z_k\partial_{z_k}p)(\mathbf{t}z) = t_kz(\partial_{z_k}p)(\mathbf{t}_0z)$ is real-rooted iff $(\partial_{z_k}p)(\mathbf{t}_0z)$ is.

$(iv)$ For any $\mathbf{t} \in \R_+^n$ with $t_k = 0$, we have that $p(t_1z,...,t_{k-1}z,0,t_{k+1}z,...,t_nz) = p(\mathbf{t}z)$ is real-rooted by definition of same-phase stability.

$(v)$ Given $\mathbf{t} \in \R_+^n$ with strictly positive entries, we have that $p(\mathbf{t}^{-1}z)$ has real roots, say at $\gamma_1, \ldots, \gamma_m$. So, $z^np(\mathbf{t}^{-1}z^{-1}) = t_1 z \ldots t_n z \cdot p((t_1z)^{-1}, \ldots, (t_nz)^{-1})$ has real roots at $\gamma_1^{-1}, \ldots, \gamma_m^{-1}$. Of course, some of these inverse zeros may be missing when some $\gamma_j = 0$, and there may be extra zeros at $z = 0$. However, this will not affect the real-rootedness of the inverted polynomial. Hurwitz's theorem then allows us to limit to all $\mathbf{t}\in \R_+^n$.

\end{proof}

The names given to some of the closure properties are specific to multi-affine polynomials. In particular, ``variable selection'' (resp. ``variable deselection'') refers to the fact that the associated actions will pick out the terms of $p$ which contain (resp. do not contain) a particular variable. Then, ``selection inversion'' inverts which terms contain which variables. The idea here is to give a combinatorial interpretation to these actions. For example, if the variables correspond to vertices on some graph, then variable deselection might correspond to removal of some vertex.

The next definition is inspired by $p + z_{n+1}q$ used in Theorem \ref{thm:alg_hb}. The proposition that follows then relates this definition to multi-affine polynomials.

\begin{definition}
Let $p,f_0,f_1,...,f_m \in \R[z_1,...,z_n]$ be polynomials, not necessarily multi-affine, such that
\[
p = f_0 + z_{i_1} f_1 + \cdots + z_{i_m} f_m.
\]
We call such an expression a \emph{proper splitting} of $p$ (with respect to $\{z_{i_j}\}_j$) if none of the $f_k$'s depend on any of the $z_{i_j}$'s. We also say that $\{z_{i_j}\}_{j=1}^m$ \emph{splits} $p$.
\end{definition}

\begin{proposition}
Let $p \in \K[z_1,...,z_n]$ be a multi-affine polynomial, and suppose $\{z_{i_j}\}_{j=1}^m$ splits $p$. Then $p$ has a unique proper splitting with respect to $\{z_{i_j}\}_j$, expressed as
\[
p = p_0 + \sum_{j=1}^m z_{i_j} \partial_{z_{i_j}} p,
\]
where $p_0$ is the polynomial $p$ with the variables $\{z_{i_j}\}_j$ evaluated at 0.
\end{proposition}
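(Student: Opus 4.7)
The plan is to derive both the explicit form and the uniqueness simultaneously by taking any putative proper splitting
\[
p = f_0 + z_{i_1} f_1 + \cdots + z_{i_m} f_m
\]
and solving for each $f_k$ using the structural assumptions. The key observations will be that (a) none of the $f_k$ depend on any variable in $\{z_{i_j}\}_{j=1}^m$, and (b) $p$ is multi-affine, so in particular $p$ has degree at most $1$ in each $z_{i_j}$.

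To extract $f_0$, I would specialize each $z_{i_j}$ to $0$. By assumption $f_0$ and every $f_k$ is independent of these variables, so the right-hand side collapses to $f_0$, while the left-hand side is $p_0$ by definition. This forces $f_0 = p_0$. To extract $f_j$ for $j \geq 1$, I would apply $\partial_{z_{i_j}}$ to both sides of the splitting. Since $f_0$ does not depend on $z_{i_j}$, the derivative $\partial_{z_{i_j}} f_0$ vanishes. For the same reason, $\partial_{z_{i_j}}(z_{i_k} f_k) = \delta_{jk}\, f_k + z_{i_k}\partial_{z_{i_j}} f_k$, where the second term vanishes because $f_k$ is independent of $z_{i_j}$. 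Summing these contributions yields $\partial_{z_{i_j}} p = f_j$.

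This gives both statements at once: the identity $p = p_0 + \sum_{j=1}^m z_{i_j}\,\partial_{z_{i_j}} p$ is forced by any proper splitting (uniqueness), and conversely the expression on the right is a proper splitting because multi-affineness guarantees that $\partial_{z_{i_j}} p$ does not depend on $z_{i_j}$, and the assumption that $\{z_{i_j}\}_j$ splits $p$ guarantees that $\partial_{z_{i_j}} p$ also does not depend on the other variables in $\{z_{i_\ell}\}_{\ell \neq j}$ (since in any proper splitting the coefficient $f_j$ already has that property). There is no real obstacle here; the only subtlety is verifying that multi-affineness of $p$ is exactly what makes the computation of $\partial_{z_{i_j}}(z_{i_k} f_k)$ collapse cleanly, so that the proof does not need to invoke any higher-order terms in $z_{i_j}$.
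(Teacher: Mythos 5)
Your proof is correct. The paper itself does not supply a formal proof of this proposition; it only states the result and, in a follow-up remark, gestures at a combinatorial interpretation (splitting holds iff no monomial of $p$ contains more than one of the $z_{i_j}$, so one simply groups monomials by which splitting variable they contain). Your argument is the clean algebraic counterpart: by specializing the $z_{i_j}$ to $0$ you force $f_0 = p_0$, and by differentiating in $z_{i_j}$ and using that each $f_k$ is independent of the splitting variables you force $f_j = \partial_{z_{i_j}}p$; this pins down every coefficient of any proper splitting and hence gives existence and uniqueness simultaneously. Both routes amount to the same observation. One small remark: the extraction steps you carry out (evaluation at $0$ and differentiation) actually need only the defining property of a proper splitting, not multi-affineness; multi-affineness enters only in making the remark that $\partial_{z_{i_j}}p$ is automatically free of $z_{i_j}$, a fact which in any case also follows from $\partial_{z_{i_j}}p = f_j$. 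So your proof in fact shows the uniqueness claim a bit more generally than stated, which is harmless.
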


Another way to think about this proposition is as follows. For a multi-affine polynomial $p \in \K[z_1,...,z_n]$, we have that $\{z_{i_j}\}_{j=1}^m$ splits $p$ iff no term of $p$ contains more than one variable from $\{z_{i_j}\}_{j=1}^m$. This naturally leads to the use of ``variable selection'' ($z_{i_j} \partial_{z_{i_j}} p$) and ``variable deselection'' ($p_0$) in the decomposition of $p$ into the above sum of polynomials.

We now reach the main theorem of this section. As mentioned before, this can be seen as a loose analogue of the stability equivalence theorem (\ref{thm:alg_hb}) of Borcea and Br{\"a}nd{\'e}n.

\begin{theorem} \label{thm:samephase}
Let $p \in \R_+[z_1,...,z_n]$ be a multi-affine polynomial with nonnegative coefficients. The following are equivalent.
\begin{enumerate}[(i)]
    \item The polynomial $p$ is same-phase stable.
    \item Given any proper splitting
        \[
        p = f_0 + \sum_{j=1}^m z_{i_j} f_j
        \]
        we have that $f_0$, $z_{i_1}f_1$, ..., and $z_{i_m}f_m$ are same-phase compatible.
    \item There exists some proper splitting
        \[
        p = f_0 + \sum_{j=1}^m z_{i_j} f_j
        \]
        such that $f_0$, $z_{i_1}f_1$, ..., and $z_{i_m}f_m$ are same-phase compatible.
\end{enumerate}
\end{theorem}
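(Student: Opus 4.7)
The plan is to handle the implications cyclically, with (ii) $\Rightarrow$ (iii) being essentially a tautology: any multi-affine $p$ admits the trivial proper splitting $p = p$ (taking $m = 0$), so hypothesis (ii) immediately supplies a witness for (iii). The substantive work lies in the equivalence of (i) with the existence or universality of same-phase compatibility of splitting components. In both nontrivial directions, the bridge to the univariate toolkit is Proposition \ref{thm:cs_compat}, which converts common interlacing to real-rootedness of arbitrary nonnegative linear combinations.

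For (iii) $\Rightarrow$ (i), I would fix an arbitrary $\mathbf{t} \in \R_+^n$ and invoke the same-phase compatibility of the given splitting: the univariate polynomials $f_0(\mathbf{t}z), t_{i_1}z\, f_1(\mathbf{t}z), \ldots, t_{i_m}z\, f_m(\mathbf{t}z)$ share a common interlacer, so by Proposition \ref{thm:cs_compat} their unit-coefficient sum is real-rooted. That sum is exactly $p(\mathbf{t}z)$, yielding same-phase stability of $p$.

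For (i) $\Rightarrow$ (ii), I would begin by showing each splitting component is individually same-phase stable: iterating variable deselection on $z_{i_1}, \ldots, z_{i_m}$ handles $f_0$, while the uniqueness statement preceding the theorem identifies $z_{i_j} f_j$ with $z_{i_j} \partial_{z_{i_j}} p$, which is same-phase stable via variable selection. The heart of the argument is the common-interlacing condition: fixing $\mathbf{t} \in \R_+^n$, I need to show $\alpha_0 f_0(\mathbf{t}z) + \sum_{j=1}^m \alpha_j t_{i_j} z\, f_j(\mathbf{t}z)$ is real-rooted for every nonnegative choice of $\alpha_j$. The key leverage is that, by the definition of a proper splitting, no $f_j$ depends on any $z_{i_k}$; hence perturbing the entries of $\mathbf{t}$ at positions $i_1, \ldots, i_m$ leaves every $f_j(\mathbf{t}z)$ invariant. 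This allows the reparametrization $s_{i_j} := (\alpha_j / \alpha_0)\, t_{i_j}$ (with $s_k := t_k$ at all other coordinates) to encode the combination as $\alpha_0\, p(\mathbf{s}z)$ whenever $\alpha_0 > 0$ and every $\alpha_j > 0$; then $\mathbf{s} \in \R_+^n$, and same-phase stability of $p$ delivers real-rootedness.

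The main obstacle is the boundary cases with some $\alpha_j = 0$ or $\alpha_0 = 0$, where the direct reparametrization breaks down. I would resolve these by continuity: real-rootedness is a closed condition on polynomials of bounded degree (here at most $\deg p$), so the set of tuples $(\alpha_0, \ldots, \alpha_m) \in \R^{m+1}_{\geq 0}$ yielding a real-rooted combination is closed, and since it contains the dense positive orthant it must coincide with the entire nonnegative orthant. The only subtlety worth flagging is the potential degree drop in the limit, which does not impede real-rootedness but merits explicit acknowledgment.
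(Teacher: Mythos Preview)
Your proposal is correct and follows essentially the same route as the paper: both directions hinge on the reparametrization trick that absorbs the weights $\alpha_j$ (your $s_{i_j} = (\alpha_j/\alpha_0)t_{i_j}$, the paper's $(\boldsymbol{\lambda t})_{i_j} = \lambda_j t_{i_j}$) into the phase vector, exploiting that the $f_j$ are blind to those coordinates. Two minor remarks: your continuity argument for the boundary cases $\alpha_j = 0$ is sound but unnecessary, since Proposition~\ref{thm:cs_compat} in this paper is stated for $\alpha_k \in \R_+$ (strictly positive), so the open orthant already suffices; and for (ii)~$\Rightarrow$~(iii) the paper exhibits the $m=1$ splitting $p = p(0,z_2,\ldots,z_n) + z_1\partial_{z_1}p$ rather than the degenerate $m=0$ splitting, which is a safer choice if one reads the definition of ``proper splitting'' as implicitly requiring $m \geq 1$.
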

\begin{proof}
$(i) \Rightarrow (ii)$ Let $p = f_0 + \sum_{j=1}^m z_{i_j}f_j$ be a proper splitting of $p$. By uniqueness of the proper splitting, $f_0$ is the polynomial $p$ with variables $\{z_{i_j}\}$ evaluated at 0, and $z_{i_j}f_j = z_{i_j}\partial_{z_{i_j}} p$. So, by closure properties, each of $f_0$, $z_{i_1}f_1$, ..., and $z_{i_m}f_m$ is same-phase stable. Now, fix $\mathbf{t} \in \R_+^n$ and $\boldsymbol{\lambda} \in \R_+^m$, and let $\boldsymbol{\lambda t}$ be defined as:
\[
(\boldsymbol{\lambda t})_i := \left\{
    \begin{array}{lr}
        \lambda_j t_{i_j}, & i = i_j \\
        t_i, & i \not\in \{i_j\}_{j=1}^m
    \end{array}
\right.
\]
That is, $\boldsymbol{\lambda t}$ is obtained by multiplying the $i_j$'th entry of $\mathbf{t}$ by $\lambda_j$ for all $j \in [m]$. With this, same-phase stability of $p$ implies
\[
\big(1 + \sum_j \lambda_j\big)^{-1}p(\boldsymbol{\lambda t}z) = \frac{f_0(\mathbf{t}z) + \sum_j \lambda_j [t_{i_j}z f_j(\mathbf{t}z)]}{1 + \sum_j \lambda_j}
\]
is real-rooted for every choice of $\boldsymbol{\lambda}$, which means every convex combination of $f_0(\mathbf{t}z)$, $t_{i_1}z f_1(\mathbf{t}z)$, ..., and $t_{i_m}z f_m(\mathbf{t}z)$ is real-rooted. So, $f_0(\mathbf{t}z)$, $t_{i_1}z f_1(\mathbf{t}z)$, ..., and $t_{i_m}z f_m(\mathbf{t}z)$ have a common interlacing. Since $\mathbf{t}$ was arbitrary, this implies $f_0$, $z_{i_1}f_1$, ..., and $z_{i_m}f_m$ are same-phase compatible.

$(ii) \Rightarrow (iii)$ This is trivial, given the existence of some proper splitting. In particular, $p = p(0,z_2,...,z_n) + z_1\partial_{z_1}p$ is always a proper splitting for multi-affine $p$.

$(iii) \Rightarrow (i)$ Fix $\mathbf{t} \in \R_+^n$. Same-phase compatibility of $f_0$, $z_{i_1}f_1$, ..., and $z_{i_m}f_m$ implies $f_0(\mathbf{t}z)$, $zf_1(\mathbf{t}z)$, ..., and $zf_m(\mathbf{t}z)$ have a common interlacing. So,
\[
\big(1 + \sum_j t_{i_j}\big)^{-1}p(\mathbf{t}z) = \frac{f_0(\mathbf{t}z) + \sum_j t_{i_j}z f_j(\mathbf{t}z)}{1 + \sum_j t_{i_j}}
\]
is real-rooted. Since $\mathbf{t}$ was arbitrary, this implies $p$ is same-phase stable.
\end{proof}

The power of this statement comes from the fact that same-phase compatibility of any particular splitting implies same-phase compatibility of every possible splitting. We will use this to our advantage in an inductive argument to follow.

\section{Multivariate Graph Polynomials and Stability}

In this section, we discuss the multivariate analogues of the independence and matching polynomials. Though somewhat counterintuitive, considering the multivariate versions of these polynomials actually simplifies the situation. In the multivariate world, one can directly manipulate how particular vertices and edges influence the polynomial by manipulating the associated variable. And further, these polynomials are multiaffine: important operations like differentiation and evaluation at 0 have intuitive interpretations.

Notions like real-rootedness and root bounds become trickier in the multivariate world, but real stability and similar notions can often play the analogous parts. This is true for the multivariate matching polynomial and somewhat true for the multivariate independence polynomial, as we will see below. But first, let's set up some notation.

\subsection{Notation}

Let $G=(V_G,E_G)$ be an undirected graph, which is simple unless otherwise specified. As usual, $V_G$ is the set of vertices and $E_G$ is the set of edges. We employ standard notation surrounding these first objects:
\begin{itemize}
    \item $\{u,v\} \in E_G$ iff there is an edge between vertices $u$ and $v$
    \item $u \in e$ for $e \in E_G$ iff $u$ is a vertex of the edge $e$
    \item $N_G[v]$ (resp. $N_G(v)$) denotes the closed (resp. open) neighborhood of $v$
    \item $H \subseteq G$ (resp. $H \leq G$) iff $H$ is a subgraph (resp. induced subgraph) of $G$
\end{itemize}
As usual, we will leave off the subscript $G$ when unambiguous. We also generalize the definition of ``claw'' in the following standard way. As usual, let $K_{m,n}$ denote the complete bipartite graph with $m+n$ vertices. So, we refer to $K_{1,3}$ as a \emph{claw} or as a \emph{3-star}. Generalizing, we refer to $K_{1,n}$ as an \emph{$n$-star}. For any graph $H$, we say that $G$ is \emph{$H$-free} if it does not contain $H$ as an induced subgraph.

Finally, we denote the line graph of $G$ by $L(G)$. This is the graph formed by considering the edges of $G$ to be the vertices of $L(G)$, with adjacency in $L(G)$ determined by whether or not the corresponding edges of $G$ share a vertex in $G$.

\subsection{The Matching Polynomial}

The univariate and multivariate matching polynomials have been well studied. In 1972, Heilmann and Lieb proved that for any graph the multivariate matching polynomial is real-stable. This implies the real-rootedness of the univariate matching polynomial, and in fact Heilmann and Lieb gave bounds on its largest root. More recently, Choe, Oxley, Sokal, and Wagner \cite{choe2004homogeneous} gave a simpler proof of this fact using a special linear operator on polynomials, called the ``multi-affine part''. We their proof below.

First though, we define and discuss a few multivariate matching polynomials. The reader should be aware that our notation will be slightly different from that which is standard; we do this to emphasize the connection between the matching and independence polynomials. We give examples of all these polynomials in Figure $\ref{fig:poly}$.

Given any graph $G$, we define the multi-affine \emph{vertex matching polynomial} of $G$ as follows.
\[
\mu_V(G) \equiv \mu_V(G)(\mathbf{x}) := \sum_{\substack{M \subset E \\ M, \text{matching}}} \prod_{\{u,v\} \in M} -x_ux_v
\]
Notice that the univariate restriction of $\mu_V(G)$ is the univariate matching polynomial used by Godsil and Heilmann-Lieb, but with the degrees inverted. So, for instance, Heilmann and Lieb's upper bound on the absolute value of the roots of the matching polynomial would translate to a bound \emph{away} from zero for this inverted polynomial. We will discuss this further later. We also define the multiaffine \emph{edge matching polynomial} of $G$ as follows.
\[
\mu_E(G) \equiv \mu_E(G)(\mathbf{x}) := \sum_{\substack{M \subset E \\ M, \text{matching}}} \prod_{e \in M} x_e
\]
We now give the proof of real stability of the vertex matching polynomial, and show its connection to the edge matching polynomial.

\begin{theorem}[\cite{HL}, \cite{choe2004homogeneous}, \cite{bb2}]
For any graph $G$, the vertex matching polynomial $\mu_V(G)$ is real stable.
\end{theorem}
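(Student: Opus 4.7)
The plan is to proceed by induction on $|E(G)|$. The base case $E(G) = \emptyset$ is trivial since $\mu_V(G) = 1$ is manifestly real stable. For the inductive step, fix any edge $e = \{u,v\}$ and use the standard edge-deletion recursion
\[
\mu_V(G) \;=\; \mu_V(G - e) \;-\; x_u x_v \, \mu_V(G - u - v),
\]
so that both terms on the right are real stable by the inductive hypothesis. The task is to show that this specific combination, with its quadratic coefficient $-x_u x_v$, remains real stable despite the fact that multiplication by $x_u x_v$ is not on our list of stability-preserving operations.

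The main maneuver is a polarization: introduce two fresh variables $y_u, y_v$ and work with the lifted polynomial
\[
\tilde{\mu}(\mathbf{x}, y_u, y_v) \;:=\; \mu_V(G - e)(\mathbf{x}) \;-\; y_u y_v \, \mu_V(G - u - v)\bigl(\mathbf{x}_{V \setminus \{u,v\}}\bigr),
\]
from which $\mu_V(G)$ is recovered by the successive projections $y_u \mapsto x_u$ and $y_v \mapsto x_v$. Since projection preserves real stability (closure property (v)), it suffices to show that $\tilde{\mu}$ is real stable on the enlarged variable set. Because $\tilde{\mu}$ is affine in $y_u$, Theorem \ref{thm:alg_hb} reduces this to the interlacing condition: for every $\mathbf{t} \in \R_+^{n+1}$ and $\mathbf{y} \in \R^{n+1}$, the univariate restriction $\bigl[y_v \, \mu_V(G-u-v)\bigr](\mathbf{t}z + \mathbf{y})$ interlaces $\mu_V(G-e)(\mathbf{t}z + \mathbf{y})$ (the overall sign is immaterial for the interlacing relation).

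The principal obstacle is establishing this interlacing; it is the multivariate lift of the classical Heilmann--Lieb fact that $\mu(H - w)$ interlaces $\mu(H)$ for any vertex $w$. I would address it by strengthening the induction to prove simultaneously (a) real stability of $\mu_V(H)$ and (b) the corresponding multivariate interlacing between $\mu_V(H - w)$ and $\mu_V(H)$ for every vertex $w$ and every graph $H$ with fewer edges than $G$. The propagation of (b) through the induction can be organized using Obreshkoff's theorem together with Proposition \ref{thm:cs_compat}, which turns the interlacing claim into the real-rootedness of certain convex combinations, and those combinations are themselves univariate restrictions of a real stable polynomial delivered by the inductive hypothesis. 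A cleaner alternative, though slightly outside the toolkit developed so far, would be to invoke Borcea--Br\"and\'en's characterization of stability-preserving linear operators: writing $\mu_V(G) = T[\mu_V(G-e)]$ for the operator $T := \mathrm{id} - x_u x_v \cdot E_{x_u = x_v = 0}$, the symbol computation yields
\[
T\Bigl[\textstyle\prod_i (x_i + y_i)\Bigr] \;=\; \prod_{i \neq u,v}(x_i + y_i) \cdot \bigl[(x_u + y_u)(x_v + y_v) - x_u x_v y_u y_v\bigr],
\]
reducing matters to checking that the four-variable factor is real stable, which follows from a short argument: dividing $(x_u+y_u)(x_v+y_v) = x_u x_v y_u y_v$ by $x_u x_v y_u y_v$ converts it to $(1/x_u + 1/y_u)(1/x_v + 1/y_v) = 1$, which is impossible for $x_u,y_u,x_v,y_v \in \mathcal{H}_+$ since each parenthesized factor then lies in $\mathcal{H}_-$ and the reciprocal of an element of $\mathcal{H}_-$ lies in $\mathcal{H}_+$, not $\mathcal{H}_-$.
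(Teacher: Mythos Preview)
Your alternative route via the Borcea--Br\"and\'en symbol criterion is correct and complete: the operator $T=\mathrm{id}-x_ux_v\cdot E_{x_u=x_v=0}$ does satisfy $T[\mu_V(G-e)]=\mu_V(G)$ (since $\mu_V(G-e)\vert_{x_u=x_v=0}=\mu_V(G-u-v)$), and your half-plane argument for the stability of $(x_u+y_u)(x_v+y_v)-x_ux_vy_uy_v$ is clean. Your primary route, the polarization-plus-interlacing induction, is only outlined; the plan to strengthen the hypothesis so as to carry along a multivariate version of $\mu_V(H-w)\ll\mu_V(H)$ is plausible and can be made to work, but the passage from ``pairs have a common interlacing'' to the specific ordered interlacing demanded by Theorem~\ref{thm:alg_hb} needs more care than you indicate.

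The paper's own proof is structurally different and much shorter. It rests on a single closed-form identity,
\[
\mu_V(G)(\mathbf{x}) \;=\; \mathrm{MAP}\!\left(\prod_{\{u,v\}\in E}(1-x_ux_v)\right),
\]
where $\mathrm{MAP}$ is the linear operator that deletes every non-multi-affine monomial. Each factor $1-x_ux_v$ is real stable, products preserve real stability, and $\mathrm{MAP}$ preserves real stability by \cite{bb2}; no induction is needed. Your operator $T$ is essentially the one-edge-at-a-time version of ``multiply by $(1-x_ux_v)$ and then take the multi-affine part,'' so the two arguments are cousins; the paper's formulation buys brevity and a transparent product structure, while your symbol computation has the virtue of isolating exactly which four-variable stability fact carries the weight.
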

\begin{proof}
Let $\text{MAP}$ (``Multi-Affine Part'') denote the linear operator on multivariate polynomials which removes any terms which are not multi-affine. By \cite{bb2}, this operator preserves real stability. We then have the following.
\[
\mu_V(G)(\mathbf{x}) = \text{MAP}\left(\prod_{\{u,v\} \in E} (1 - x_ux_v)\right)
\]
Since $(1-x_ux_v)$ is real stable and the product of real stable polynomials is real stable, this implies the result.
\end{proof}

This then implies real-rootedness of the univariate matching polynomial via univariate restriction. As for the edge matching polynomial, we don't quite have real stability. However, we do have same-phase stability, which still implies real-rootedness of the univariate restriction.

\begin{corollary}
For any graph $G$, the edge matching polynomial $\mu_E(G)$ is same-phase stable.
\end{corollary}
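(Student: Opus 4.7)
The plan is to adapt the \text{MAP}-based proof of the preceding theorem so that it carries arbitrary positive edge weights, which is precisely the flexibility needed to witness same-phase stability. Given any $\mathbf{t}\in\R_+^{E}$, I would first verify that each weighted linear factor $1-t_e x_u x_v$ is real stable. Writing $x_u,x_v\in\mathcal{H}_+$ in polar form, the product $x_u x_v$ has argument strictly in $(0,2\pi)$ and hence cannot equal the positive real $1/t_e$; so $1-t_e x_u x_v$ does not vanish on $\mathcal{H}_+^2$. Real stability of the product
\[
P_\mathbf{t}(\mathbf{x}) := \prod_{\{u,v\}\in E}\bigl(1 - t_{\{u,v\}} x_u x_v\bigr)
\]
then follows from closure of real stability under products.

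Next I would apply the \text{MAP} operator of \cite{bb2} (which preserves real stability) and expand: a term $\prod_{e\in S}(-t_e x_u x_v)$ from the expansion of $P_\mathbf{t}$ is multi-affine in $\mathbf{x}$ precisely when $S$ is a matching, so
\[
\text{MAP}(P_\mathbf{t})(\mathbf{x}) = \sum_{M\text{ matching}}(-1)^{|M|}\Bigl(\prod_{e\in M} t_e\Bigr)\prod_{\{u,v\}\in M} x_u x_v,
\]
which is real stable. Using the projection closure property to identify every $x_v$ with a single variable $x$, I would conclude that the univariate polynomial
\[
\sum_{M\text{ matching}}(-x^{2})^{|M|}\prod_{e\in M} t_e
\]
is real-rooted in $x$. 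Since this polynomial is even in $x$, under the substitution $z=-x^{2}$ real-rootedness in $x$ is equivalent to real (in fact nonpositive) rootedness in $z$ of $\mu_E(G)(\mathbf{t}z)=\sum_M z^{|M|}\prod_{e\in M} t_e$. Since $\mathbf{t}\in\R_+^{E}$ was arbitrary, $\mu_E(G)$ is same-phase stable.

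The only real conceptual hurdle is recognizing that the \text{MAP} argument used to prove real stability of $\mu_V(G)$ generalizes mechanically to allow positive real edge weights inside each factor; once this observation is in hand, every remaining step (real stability of the weighted linear factor, the combinatorial identification of multi-affine terms with matchings, the projection of all vertex variables onto a common variable, and the transfer of real-rootedness through $z=-x^2$) is routine. In particular, no density, multigraph, or limiting argument in $\mathbf{t}$ is required.
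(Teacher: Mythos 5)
Your proposal follows essentially the same approach as the paper: fix $\mathbf{t}\in\R_+^E$, absorb the positive weights $t_e$ into the real-stable factors $1-t_ex_ux_v$, apply $\Pi^\downarrow\circ\text{MAP}$ (both stability-preserving) to the product, and recover $\mu_E(G)(\mathbf{t}z)$ real-rooted from the even polynomial $\mu_E(G)(-\mathbf{t}x^2)$ via $z=-x^2$. The paper states the key identity in one displayed line and cites closure properties, whereas you spell out the intermediate verifications, but the logic is the same.
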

\begin{proof}
Let $\Pi^\downarrow$ be the projection operator, which sends all variables $x_v$ to a single variable $x$. Fixing $(t_e)_{e \in E} \in \R_+^{|E|}$, we have the following.
\[
\mu_E(G)(-\mathbf{t}x^2) = \sum_{\substack{M \subset E \\ M, \text{matching}}} \prod_{e \in M} -t_e x^2 = (\Pi^\downarrow \circ \text{MAP})\left(\prod_{\{u,v\} \in E} (1 - t_ex_ux_v)\right)
\]
By closure properties of real stability and the fact that $t_e > 0$ implies $(1 - t_ex_ux_v)$ is real stable, the right-hand side of the above equation is real-rooted. So, $\mu_E(G)(-\mathbf{t}x^2)$ is real-rooted, which implies $\mu_E(G)(\mathbf{t}x)$ is real-rooted. (In fact, it has all its roots on the negative part of the real line.) Since $\mathbf{t}$ was arbitrary, this implies the result.
\end{proof}

It's well-known that matchings of graphs are related to independent sets of line graphs. This connection is made particularly clear by considering the (multivariate) edge matching polynomial, as we will see in the next section.

\subsection{The Independence Polynomial}

The univariate independence polynomial of a graph is another well-studied graph polynomial. However, consideration of its roots has proven a bit more difficult. For example, the independence polynomial of a graph is \emph{not} real-rooted in general, and it has only been about a decade since the first proof of real-rootedness for claw-free graphs was published in \cite{CS}. Since then a number of proofs of real-rootedness have appeared, along with interesting results about location and modulus of certain roots (\cite{rootbounds3}, \cite{rootbounds2}, \cite{mehler}, \cite{CDidentities}).

Here, we give another proof of real-rootedness for claw-free graphs by proving something stronger: namely, that the multivariate independence polynomial of a graph is same-phase stable if and only if the graph is claw-free. In their original proof, Chudnovsky and Seymour show real-rootedness using an intricate recursion based on a combinatorial structure known as a ``simplicial clique''. By encoding the recursive compatibility using our notion of same-phase stability, we are able to avoid the introduction of simplicial cliques and use simpler graph structures in the recursion. Same-phase stability of the edge matching polynomial then serves as the base case.

Before giving this proof, we need to set up the relevant notation. Given any graph $G$, we define the multi-affine \emph{independence polynomial} of $G$ as follows.
\[
I(G) \equiv I(G)(\mathbf{x}) := \sum_{\substack{S \subset V \\ S, \text{independent}}} \prod_{v \in S} x_v
\]

Stability properties of the multivariate independence polynomial have been previously studied by Scott and Sokal. In \cite{lll}, they observe this polynomial as a specific case of a more general statistical-mechanical partition function, and generic lower bounds on the modulus of the roots are studied. In particular, the Lov{\'a}sz local lemma is used to give a universal lower bound of $\frac{1}{e \cdot \Delta}$, where $\Delta$ is the maximum degree of $G$.

\begin{figure}
    \centering
    \begin{tabular}{m{1.5in}  m{4in}}
       
         \begin{tikzpicture}
create the node
\node[draw=black,minimum size=3cm,regular polygon,regular polygon sides=6] (p) {};

\node[label={a}, draw=black,fill,circle,inner sep=0pt,minimum size=3pt] (a) at (p.corner 1) {};

\node[label={b}, draw=black,fill,circle,inner sep=0pt,minimum size=3pt] (b) at (p.corner 2) {};

\node[label=left:{c}, draw=black,fill,circle,inner sep=0pt,minimum size=3pt] (c) at (p.corner 3) {};

\node[label=below:{d}, draw=black,fill,circle,inner sep=0pt,minimum size=3pt] (d) at (p.corner 4) {};

\node[label=below:{e}, draw=black,fill,circle,inner sep=0pt,minimum size=3pt] (e) at (p.corner 5) {};

\node[label=right:{f}, draw=black,fill,circle,inner sep=0pt,minimum size=3pt] (f) at (p.corner 6) {};
\end{tikzpicture}
& \begin{tabular}{l} 
\parbox{3.7in}{$\mu_E(C_6,x) = 1 + x_{ab} + x_{bc} + x_{cd} + x_{de} + x_{ef} + x_{fa} + x_{ab} x_{cd} + x_{ab} x_{de} + x_{ab} x_{ef} + x_{bc} x_{de} + x_{bc} x_{ef} + x_{bc} x_{fa} + x_{cd} x_{ef} + x_{cd} x_{fa} + x_{de} x_{fa} + x_{ab} x_{cd} x_{ef} + x_{bc} x_{de} x_{fa}$}\\ \\
\parbox{3.7in}{$\mu_V(C_6,x) = 1 - x_{a}x_{b} - x_{b}x_{c} - x_{c}x_{d} - x_{d}x_{e} - x_{e}x_{f} - x_{f}x_{a} + x_{a}x_{b} x_{c}x_{d} + x_{a}x_{b} x_{d}x_{e} + x_{a}x_{b} x_{e}x_{f} + x_{b}x_{c} x_{d}x_{e} + x_{b}x_{c} x_{e}x_{f} + x_{b}x_{c} x_{f}x_{a} + x_{c}x_{d} x_{e}x_{f} + x_{c}x_{d} x_{f}x_{a} + x_{d}x_{e} x_{f}x_{a} - x_{a}x_{b} x_{c}x_{d} x_{e}x_{f} - x_{b}x_{c} x_{d}x_{e} x_{f}x_{a} $}\\ \\
\parbox{3.7in}{$I(C_6,x) = 1 + x_a + x_b + x_c + x_d + x_e + x_f + x_a x_c + x_a x_d + x_a x_e + x_b x_d + x_b x_e + x_b x_f + x_c x_e + x_c x_f + x_d x_f + x_a x_c x_e + x_b x_d x_f$}\\
\end{tabular}
    \end{tabular}
    
    \caption{A small graph $C_6$ with associated independence polynomial, vertex/edge matching polynomials.}
    \label{fig:poly}
\end{figure}
As discussed in the notation above, for a given graph $G$ we denote the line graph of $G$ by $L(G)$. Since line graphs are claw-free, we have the following first step toward the desired result.

\begin{corollary}
For any graph $G$, the independence polynomial $I(L(G))$ of the line graph of $G$ is same-phase stable.
\end{corollary}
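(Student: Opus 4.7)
The plan is to observe that $I(L(G))$ and $\mu_E(G)$ are, up to the natural relabeling of variables, literally the same polynomial, and then invoke the previous corollary asserting same-phase stability of $\mu_E(G)$.

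Concretely, recall that by definition of the line graph $L(G)$, the vertices of $L(G)$ are in bijection with the edges of $G$, and two vertices of $L(G)$ are adjacent iff the corresponding edges of $G$ share an endpoint. Under this bijection, a subset $S \subseteq V_{L(G)}$ is independent in $L(G)$ iff the corresponding subset $M \subseteq E_G$ contains no two edges sharing an endpoint, i.e., $M$ is a matching of $G$. Thus identifying the variable $x_v$ for $v \in V_{L(G)}$ with the variable $x_e$ for the corresponding $e \in E_G$, we get termwise
\[
I(L(G))(\mathbf{x}) = \sum_{\substack{S \subset V_{L(G)} \\ S,\text{independent}}} \prod_{v \in S} x_v = \sum_{\substack{M \subset E_G \\ M,\text{matching}}} \prod_{e \in M} x_e = \mu_E(G)(\mathbf{x}).
\]

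Since the previous corollary establishes that $\mu_E(G)$ is same-phase stable for every graph $G$, and same-phase stability is a property purely of the polynomial (not of the names of its variables), we conclude that $I(L(G))$ is same-phase stable.

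There is essentially no obstacle here: the content of the statement is entirely in the two preceding results (the same-phase stability of $\mu_E(G)$, which in turn used closure properties of real stability together with the real-stability of the vertex matching polynomial). The only thing to be careful about is to write the matching--independent set bijection cleanly and to verify the identification of variables, after which the statement is immediate.
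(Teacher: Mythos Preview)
Your proof is correct and follows essentially the same approach as the paper: both establish the identity $\mu_E(G) = I(L(G))$ via the standard matching/independent-set bijection for line graphs and then invoke the previous corollary on same-phase stability of $\mu_E(G)$.
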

\begin{proof}
By considering the fact that the operator $L$ maps edges to vertices and shared vertices to edges, we actually have the following identity.
\[
\mu_E(G) = I(L(G))
\]
The previous corollary gives the desired result.
\end{proof}

Of course, this is quite far from the claim that \emph{all} claw-free graphs are same-phase stable. However, as it turns out, line graphs will serve a base case in our induction on general claw-free graphs. To illustrate this, we first give the following lemma.

\begin{lemma}
Let $G$ be a connected claw-free graph which is also triangle-free. Then, $G$ is either a path or a cycle. In particular, $G$ is a line graph.
\end{lemma}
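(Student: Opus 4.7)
The plan is to show that a connected, claw-free, triangle-free graph necessarily has maximum degree at most $2$, from which the structural conclusion is immediate.

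First, I would fix any vertex $v \in V_G$ and examine its open neighborhood $N(v)$. Since $G$ is triangle-free, no two neighbors of $v$ can be adjacent; otherwise together with $v$ they would form a $K_3$. Hence $N(v)$ is an independent set in $G$.

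Next, I would use claw-freeness to bound $|N(v)|$. If $v$ had three distinct neighbors $u_1, u_2, u_3$, then by the previous step $\{u_1, u_2, u_3\}$ is an independent set, and so the induced subgraph on $\{v, u_1, u_2, u_3\}$ is exactly $K_{1,3}$, contradicting the assumption that $G$ is claw-free. Therefore every vertex of $G$ has degree at most $2$.

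Once maximum degree at most $2$ is established, a standard and elementary argument finishes the job: a connected graph in which every vertex has degree at most $2$ is either a (finite or one-way infinite) path or a cycle. For the final sentence, I would note that the path $P_n$ is the line graph of $P_{n+1}$ and that the cycle $C_n$ satisfies $L(C_n) = C_n$, so in both cases $G$ is a line graph as claimed.

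The only subtlety worth being careful about is the verification that the three neighbors genuinely form an \emph{induced} claw together with $v$; this is precisely where triangle-freeness is invoked (to guarantee the independence of the three neighbors). Apart from this, the argument is routine and should be no more than a few lines in its final form.
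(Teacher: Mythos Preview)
Your proposal is correct and follows essentially the same approach as the paper: both argue that claw-freeness together with triangle-freeness forces every vertex to have degree at most $2$, then invoke the standard classification of connected graphs with $\Delta \leq 2$ as paths or cycles, and finally note $P_n = L(P_{n+1})$ and $C_n = L(C_n)$. The paper's version is slightly more explicit in deriving the path/cycle dichotomy via a degree-sum count, whereas you defer to the standard fact; also, the parenthetical about ``one-way infinite'' paths is unnecessary here since the ambient graphs are finite.
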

\begin{proof}
Given a vertex $v \in G$, if the degree of $v$ is greater than 2 then we get either a claw with $v$ as the base or a triangle. We conclude that a graph which is connected, claw-free, and triangle-free is equivalent to being connected and triangle-free with all vertices degree 1 or 2.



\end{proof}

With this, we now give the proof of same-phase stability for claw-free graphs, using the theory of same-phase compatibility developed above. As mentioned in the introduction, this result is a reformulation of a theorem of Engstr\"om given in \cite{weighted}. 

\begin{theorem}[Engstr\"om]
For any claw-free graph $G$, the independence polynomial $I(G)$ is same-phase stable.
\end{theorem}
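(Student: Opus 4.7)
The plan is to induct on $|V(G)|$, with trivial base case $|V(G)| \leq 1$. Since the independence polynomial of a disjoint union factors and same-phase stability is closed under disjoint product, I may assume $G$ is connected. The key dichotomy is whether $G$ contains a triangle. If it does not, the preceding lemma says $G$ is a path or cycle, hence a line graph, and the corollary identifying $I(L(H))$ with $\mu_E(H)$ delivers same-phase stability immediately. This is the only non-inductive case.

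Assume now that $G$ contains a triangle, and let $K$ be any clique in $G$ with $|K| \geq 3$. The goal will be to establish same-phase compatibility of the proper splitting
\[
I(G) = I(G - K) + \sum_{w \in K} x_w \, I(G - N[w]),
\]
which is proper with respect to $\{x_w : w \in K\}$ precisely because $K$ is a clique. By the equivalence (i) $\Leftrightarrow$ (iii) of the main theorem on same-phase stability for multi-affine polynomials, this will give that $I(G)$ is same-phase stable. Using the earlier remark that same-phase compatibility is pairwise, I only need to check pairs. For each pair, the strategy is to recognize the sum of the two components as $I(H)$ for a strictly smaller claw-free induced subgraph $H$ of $G$, which is same-phase stable by the inductive hypothesis; the main theorem applied to that smaller polynomial (in the direction (i) $\Rightarrow$ (ii)) then yields the required pairwise compatibility. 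Note that none of the components involved depend on $x_u$ for $u \in K$ outside the chosen pair, so compatibility over $\R_+^{V(H)}$ promotes to compatibility over $\R_+^{V(G)}$.

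Concretely, for a pair $\{I(G - K), x_w I(G - N[w])\}$ with $w \in K$, I would take $H = G - (K \setminus \{w\})$; splitting independent sets of $H$ on membership of $w$ and using $K \setminus \{w\} \subseteq N(w)$ gives $I(H) = I(G - K) + x_w I(G - N[w])$, with $|V(H)| < |V(G)|$ since $|K| \geq 2$. For a pair $\{x_u I(G - N[u]), x_v I(G - N[v])\}$ with distinct $u, v \in K$, I would take $H = G - (K \setminus \{u, v\})$; the analogous bookkeeping gives the proper splitting $I(H) = I(G - K) + x_u I(G - N[u]) + x_v I(G - N[v])$ by $\{x_u, x_v\}$, and $|V(H)| < |V(G)|$ precisely because $|K| \geq 3$.

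The main obstacle is this second pairwise case, and it is exactly what forces the triangle dichotomy. If only an edge $K$ of size $2$ were available, taking $H = G - (K \setminus \{u, v\}) = G$ would reduce the claim to itself, a tautology rather than an honest reduction. This is why the triangle-free case must be handled separately via the preceding lemma, with same-phase stability of $\mu_E$ on line graphs serving as the genuine non-circular base of the induction.
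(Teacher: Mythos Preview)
Your proposal is correct and follows essentially the same approach as the paper: induct on $|V(G)|$, reduce disconnected graphs via the disjoint-product closure, handle connected triangle-free graphs via the line-graph lemma, and in the presence of a triangle use the proper splitting of $I(G)$ with respect to a clique and verify pairwise same-phase compatibility by recognizing each pair as sitting inside a proper splitting of $I(H)$ for a strictly smaller induced (hence claw-free) subgraph $H$.

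The only organizational difference is that the paper fixes a triangle $\{u,v,w\}$ and, to certify all pairs at once, looks at the three single-vertex deletions $G\setminus\{u\}$, $G\setminus\{v\}$, $G\setminus\{w\}$ (each of whose two-variable splittings simultaneously certifies three pairs), whereas you allow any clique $K$ with $|K|\ge 3$ and, for each pair, pass to the tailored subgraph $G-(K\setminus\{w\})$ or $G-(K\setminus\{u,v\})$. Both reductions land on the same inductive hypothesis; your version is a bit more explicit about which $H$ witnesses which pair, while the paper's is slightly more economical.
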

\begin{proof}
We induct on the number of vertices. If $G$ is disconnected, then its independence polynomial is the product of the independence polynomials of its connected components. The inductive hypothesis on components of $G$ (along with the disjoint product closure property for same-phase stable polynomials) then implies the result for $G$. If $G$ is connected and contains no 3-cliques (triangles), then $G$ is a line graph by the previous lemma. The line graph corollary then implies the result for $G$. If neither of these conditions is satisfied, then $G$ is a connected graph with at least one 3-clique. Let $u,v,w$ denote the vertices of this 3-clique.

In the independence polynomial $I(G)$, let the variables $z_u,z_v,z_w$ represent the vertices $u,v,w$, respectively. Consider the following equivalent expressions of $I(G)$.
\[
\begin{split}
    I(G)
    &= \left.I(G)\right|_{u=v=w=0} + z_u \partial_{z_u}I(G) + z_v \partial_{z_v}I(G) + z_w \partial_{z_w}I(G) \\
    &= I(G \setminus \{u,v,w\}) + z_u I(G \setminus N[u]) + z_v I(G \setminus N[v]) + z_w I(G \setminus N[w]) \\
    &= \left[I((G \setminus \{u\}) \setminus \{v,w\}) + z_v I((G \setminus \{u\}) \setminus N[v]) + z_w I((G \setminus \{u\}) \setminus N[w])\right] + z_u I(G \setminus N[u]) \\
    &= \left[I((G \setminus \{v\}) \setminus \{u,w\}) + z_u I((G \setminus \{v\}) \setminus N[u]) + z_w I((G \setminus \{v\}) \setminus N[w])\right] + z_v I(G \setminus N[v]) \\
    &= \left[I((G \setminus \{w\}) \setminus \{u,v\}) + z_u I((G \setminus \{w\}) \setminus N[u]) + z_v I((G \setminus \{w\}) \setminus N[v])\right] + z_w I(G \setminus N[w]) \\
\end{split}
\]
The square-bracketed sections of the last three expressions are proper splittings of $I(G \setminus \{u\})$, $I(G \setminus \{v\})$, and $I(G \setminus \{w\})$, respectively. By the inductive hypothesis and the same-phase stability theorem, these proper splittings have terms which are same-phase compatible. So, the terms of the first expression of $I(G)$ are pairwise same-phase compatible. By Corollary \ref{cor:pw_samephase_compat}, we have that all the terms of the first expression are same-phase compatible. These terms give a proper splitting of $I(G)$, and so Theorem \ref{thm:samephase} implies $I(G)$ is same-phase stable.
\end{proof}

An interesting feature of the above proof is the fact that the inductive step did not use the fact that $G$ is claw-free. This suggests that perhaps the theorem can be extended to certain clawed graphs. However, the following corollary shows that this is not the case.

\begin{corollary}
For any graph $G$, the independence polynomial $I(G)$ is same-phase stable if and only if $G$ is claw-free (3-star-free).
\end{corollary}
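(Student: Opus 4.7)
The plan is to handle the two directions separately. The forward implication ($G$ claw-free $\Rightarrow I(G)$ same-phase stable) is exactly the theorem just proved, so the work is entirely in the converse. For this I would argue the contrapositive: assume $G$ contains an induced claw on vertices $\{c,\ell_1,\ell_2,\ell_3\}$ (with center $c$), and show that $I(G)$ is not same-phase stable.

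The key reduction is the variable deselection closure property: setting a variable $x_v$ to $0$ preserves same-phase stability, since the univariate restriction $p(\mathbf{t}z)$ with the appropriate coordinate set to $0$ is a limit of real-rooted polynomials as the corresponding $t_v \to 0^+$. Combinatorially, setting $x_v=0$ in $I(G)$ kills every independent set containing $v$ and therefore yields $I(G\setminus\{v\})$. Iterating, if I set $x_v = 0$ for every $v \in V(G)\setminus\{c,\ell_1,\ell_2,\ell_3\}$, I obtain the independence polynomial of the induced subgraph on $\{c,\ell_1,\ell_2,\ell_3\}$, which by hypothesis is the claw $K_{1,3}$. Hence, if $I(G)$ were same-phase stable, so would be $I(K_{1,3})$.

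It remains to show that $I(K_{1,3})$ is \emph{not} same-phase stable, by exhibiting a single $\mathbf{t}\in\R_+^4$ for which $I(K_{1,3})(\mathbf{t}x)$ is not real-rooted. The simplest choice is $\mathbf{t}=(1,1,1,1)$. Enumerating the independent sets of the claw (the empty set, the four singletons, the three pairs of leaves, and the triple of leaves) gives
\[
I(K_{1,3})(x,x,x,x) \;=\; 1 + 4x + 3x^2 + x^3.
\]
Its derivative $3x^2+6x+4$ has discriminant $36-48=-12<0$, so the derivative is strictly positive and the cubic is strictly increasing; it therefore has exactly one real root and two complex conjugate roots, and is not real-rooted. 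This contradicts same-phase stability of $I(K_{1,3})$, and hence of $I(G)$, completing the contrapositive.

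The argument has no substantive obstacle: both ingredients (the deselection closure property and the non-real-rootedness of the claw's independence polynomial) are essentially at hand, and the only thing to notice is that the multivariate framework lets one ``zoom in'' on any induced subgraph by setting the outside variables to zero, thereby turning a local obstruction (one induced claw) into a global failure of same-phase stability.
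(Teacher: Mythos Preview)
Your proof is correct and follows essentially the same approach as the paper: use variable deselection to reduce to the induced claw, then check that the univariate specialization $1+4x+3x^2+x^3$ is not real-rooted. The only difference is cosmetic---you supply the derivative/discriminant computation explicitly, whereas the paper simply asserts non-real-rootedness.
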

\begin{proof}
By the above theorem, we only need to show that the independence polynomial of a graph with a claw is not same-phase stable. To get a contradiction, let $G$ be a graph such that the vertices $u,v,w,x$ form a claw, and yet $I(G)$ is same-phase stable. Let $p(z_u,z_v,z_w,z_x)$ be the polynomial obtained by evaluating $I(G)$ at zero for all other variables (besides $z_u$, $z_v$, $z_w$, and $z_x$). By closure properties, $p$ is also same-phase stable. With this we compute $p(\mathbf{t}z)$ for $\mathbf{t} = (1,1,1,1)$:
\[
p(z,z,z,z) = 1 + 4z + 3z^2 + z^3
\]
This polynomial is not real-rooted, which gives the desired contradiction.
\end{proof}

With this equivalence in mind, one might wonder for what smaller class of graphs the independence polynomial is actually real stable. A somewhat surprising result is the following.

\begin{proposition}
For any connected graph $G$, the independence polynomial $I(G)$ is real stable if and only if $G$ is complete (2-star-free).
\end{proposition}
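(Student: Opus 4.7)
The plan is to handle the two directions separately. The forward direction is essentially a one-line computation: if $G$ is complete on vertex set $V$, then the only independent sets are $\emptyset$ and the singletons, so $I(G) = 1 + \sum_{v \in V} x_v$. Whenever each $x_v$ lies in $\mathcal{H}_+$, the sum has strictly positive imaginary part, so $1 + \sum_v x_v \neq 0$, and hence $I(G)$ is real stable.

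For the reverse direction I argue the contrapositive. First I invoke the standard fact that a connected graph is $P_3$-free iff it is complete: given two nonadjacent vertices, a shortest path between them has length at least $2$, and any three consecutive vertices on that path induce a $P_3$. So if $G$ is connected but not complete, it contains an induced $P_3$ on some vertices $u, v, w$, with $v$ adjacent to both $u$ and $w$ and $\{u,w\} \notin E$. I then apply closure property (iv) (real specialization at $r = 0$): if $I(G)$ were real stable, the polynomial obtained by setting $x_s = 0$ for every $s \notin \{u,v,w\}$ would also be real stable. Since setting $x_s = 0$ in $I(G)$ kills precisely the terms corresponding to independent sets containing $s$, what remains is $I(G[\{u,v,w\}]) = I(P_3) = 1 + x_u + x_v + x_w + x_u x_w$.

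The remaining task is to exhibit a zero of $I(P_3)$ in $\mathcal{H}_+^3$. I rewrite the polynomial as $x_v + (1+x_u)(1+x_w)$, which turns the search into a one-parameter problem: choose $x_u, x_w \in \mathcal{H}_+$ so that $(1+x_u)(1+x_w)$ lies in the lower half-plane, and then set $x_v$ to its negation. Taking $x_u = x_w = -2 + i$ gives $(1+x_u)(1+x_w) = (-1+i)^2 = -2i$, so the choice $x_v = 2i \in \mathcal{H}_+$ makes $I(P_3)$ vanish, contradicting real stability of $I(G)$. The only step with any content is producing this explicit upper-half-plane zero; the factored form makes it a short computation rather than a genuine obstacle, so no substantial difficulty remains.
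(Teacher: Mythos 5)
Your forward direction and your reduction to $I(P_3) = 1 + x_u + x_v + x_w + x_u x_w$ match the paper exactly (shortest path between non-adjacent vertices, then specialize the other variables to $0$ using closure under real specialization). Where you diverge is the final step. The paper invokes the strongly Rayleigh characterization of real stability for multi-affine polynomials: it evaluates the Rayleigh inequality $(\partial_{x_u}I)(\partial_{x_w}I) \geq (\partial_{x_u x_w}I)(I)$ at the real point $\mathbf{x}_0 = (-1,1,-1)$ and finds $0 < 1$, violating the inequality. You instead exhibit an explicit zero of $I(P_3)$ in $\mathcal{H}_+^3$: rewriting $I(P_3) = x_v + (1+x_u)(1+x_w)$ and taking $x_u = x_w = -2+i$, $x_v = 2i$, which indeed gives $(-1+i)^2 + 2i = -2i + 2i = 0$ with all three inputs having positive imaginary part. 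Both arguments are correct. Yours is more elementary in that it works straight from the definition of stability and avoids citing the strongly Rayleigh equivalence; the paper's approach has the advantage of staying entirely over $\R$ and illustrating a general-purpose test (the Rayleigh inequality) that the paper has already set up and that is often easier to apply when an explicit complex zero is not as readily visible.
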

\begin{proof}
If $G$ is a complete graph, then the independence polynomial of $G$ is $1 + \sum_{v \in V} x_v$, which is real stable. On the other hand, suppose $G$ is some connected incomplete graph such that $I(G)$ is real stable. By incompleteness and connectedness, $G$ contains an induced path $P$ of length at least 2. (E.g., consider the shortest path between two non-adjacent vertices.) In fact, we can assume $P$ is of length exactly 2 by removing all but 3 consecutive vertices. Notice that $P$ is now an induced 2-star. Evaluating $I(G)$ at 0 the variables $x_v$ for which $v \not\in P$, we obtain $I(P)$, the independence polynomial of $P$. Closure properties imply $I(P)$ is real stable.

Labeling the vertices of $P$ as $u,v,w$, we then have
\[
I(P)(\mathbf{x}) = 1 + x_u + x_v + x_w + x_ux_w,
\]
which, for $\mathbf{x}_0 = (-1,1,-1)$, gives
\[
\partial_{x_u}I(P)(\mathbf{x}_0) \cdot \partial_{x_w}I(P)(\mathbf{x}_0) = 0 < 1 = \partial_{x_ux_w}I(P)(\mathbf{x}_0) \cdot I(P)(\mathbf{x}_0).
\]
That is, $I(P)$ is not strongly Rayleigh. So, $I(P)$ is not real stable, which is the desired contradiction.
\end{proof}

\section{Root Bounds}

In addition to proving real rootedness of the matching polynomial, Heilmann and Lieb established bounds on the modulus of roots of the matching polynomial. Since we use the inverted matching polynomial, this result bounds the roots, $\lambda$, of $\mu_V(G)$ away from zero:
$$|\lambda| \geq \frac{1}{2 \sqrt{\Delta - 1}}$$

Since $\mu_V(G)(x) = I(L(G))(-x^2)$ this result can be stated equivalently as a bound on the root closest to zero, $\lambda_1$, for the independence polynomial of line graphs. To do this note that the maximum degree, $\Delta$, of a graph is equal to the clique size, $\omega$, of its line graph.
$$\lambda_1(I(L(G))) \leq \frac{1}{4(\omega-1)}$$

Since all line graphs are claw-free graphs, we can seek out similar bounds for the independence polynomial of claw-free graphs. In what follows, we adapt the methods of Godsil to determine such root bounds for a certain subclass of claw-free graphs, namely those which contain a simplicial clique. (Although we were able to avoid simplicial cliques in the proof of real-rootedness, they turn out to be crucial to generalizing the Heilmann-Lieb root bound.) We then discuss how the bound does not extend to all claw-free graphs.

To this end, we first discuss Godsil's original divisibility result which was key to his proof of the Hielmann-Lieb root bound. We do this in the multivariate world, though, so as to provide context for the later results on the independence polynomial.

\subsection{Path Trees}

A basic element of Godsil's proof of the root bound is the notion of a path tree of a graph. We now define this notion as he did, and subsequently discuss what needs to be altered in order to apply it to the multivariate matching polynomial.

\begin{definition}
Given a graph $G$ and vertex $v$, we define the \emph{(labeled) path tree} $T_v(G)$ of $G$ with respect to $v$ recursively as follows. If $G$ is a tree, we define $T_v(G) = G$, and we say that $v$ is the root of $T_v(G)$. We also label the vertices of $T_v(G)$ using the vertices of $G$. (In the recursive step, we will continue to label using vertices of $G$.)

For an arbitrary graph $G$, we first consider the forest which is the disjoint union of the labeled trees $T_w(G \setminus \{v\})$ for each $w \in N(v)$. We then define $T_v(G)$ by appending a vertex (the root) labeled $v$ and connecting it to the roots of each of these trees. 
\end{definition}
\begin{remark}
Figure $\ref{fig:example1}$ gives an example of a path tree. Note that it is defined in such a way that the paths stemming from $v$ in $G$ and from the root, $v$ in $T_v(G)$, are in order preserving bijection (where the order on paths is the subpath ordering).
\end{remark}



In Godsil's proof of the root bound for the matching polynomial, he shows that the univariate vertex matching polynomial of $G$ divides that of $T_v(G)$ for any $v$. In the multivariate world, this divisibility relation won't be possible, a priori, since there are potentially far more vertices (and hence, variables) in $T_v(G)$ than in $G$. However, using the labeling of the vertices described above, we can in fact extend this divisibility result. We now formalize this notion of labeling, so as to easily generalize it to all relevant multivariate graph polynomials.

Let $G,H$ be two graphs, and let $\phi: G \rightarrow H$ be a graph homomorphism. We call this homomorphism a \emph{labeling of $G$ by $H$}. For a graph $G$, we define the \emph{relative vertex matching polynomial} (with respect to $\phi$) as follows.
\[
\mu^\phi_V(G) \equiv \mu^\phi_V(G)(\mathbf{x}) := \sum_{\substack{M \subset E(G) \\ M, \text{matching}}} \prod_{\{u,v\} \in M} -x_{\phi(u)}x_{\phi(v)}
\]
We define the \emph{relative edge matching polynomial} and the \emph{relative independence polynomial} (with respect to $\phi$) analogously. When unambiguous, we will remove the $\phi$ superscript from the notation. Notice that the univariate specialization of each of the normal matching and independence polynomials is the same as that of the relative matching and independence polynomials, for any $\phi$. This notion then gives us a way to compare multivariate matching and independence polynomials from different graphs without destroying any univariate information.

Now, consider the labeling of vertices described in the construction of $T_v(G)$ above. This can extended to a graph homomorphism, $\phi_v: T_v(G) \to G$ in a unique way. Specifically, the vertices of $T_v(G)$ are mapped to the vertices of $G$ via the labeling given above (e.g., the root of $T_v(G)$ maps to $v \in G$, the neighbors of the root are mapped to the neighbors of $v \in G$, etc.). An edge $\{u,w\}$ of $T_v(G)$ is then mapped to the edge $\{T_v(u), T_v(w)\}$ in $G$, which exists by the inductive construction given above.

In what follows, we will consider the graph polynomials $\mu^{\phi_v}_V(T_v(G))$ and $\mu^{\phi_v}_E(T_v(G))$. For simplicity of notation, we will from now on denote these polynomials $\mu_V(T_v(G))$ and $\mu_E(T_v(G))$, respectively. That is, reference to $\phi_v$ will be dropped.

With this, we now state the generalization of Godsil's divisibility theorem for the vertex matching polynomial. We omit the proof, as this theorem turns out to be a corollary of a more general result related to independence polynomials.

\begin{theorem}[Godsil]
Let $v$ be a vertex of the graph $G = (V,E)$, and let $T \equiv T_v(G)$ be the path tree of $G$ with respect to $v$. Further, let $\mu_V(T) \equiv \mu^{\phi_v}_V(T)$ denote the relative vertex matching polynomial. We then have the following.
\[
\frac{\mu_V(G)}{\mu_V(G \setminus v)} = \frac{\mu_V(T)}{\mu_V(T \setminus v)}
\]
Further, $\mu_V(G)$ divides $\mu_V(T)$.
\end{theorem}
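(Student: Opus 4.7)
The plan is to adapt Godsil's classical inductive argument to the labeled multivariate setting. The key ingredient is the basic deletion recursion for the relative vertex matching polynomial, which reads
\[
\mu_V(G) = \mu_V(G \setminus v) - x_v \sum_{u \in N(v)} x_u \, \mu_V(G \setminus \{u,v\}),
\]
obtained by partitioning matchings of $G$ according to whether $v$ is unmatched or paired with some neighbor $u$. The same recursion applied at the root of $T \equiv T_v(G)$ is the engine of the proof: the neighbors of the root $v$ in $T$ are the roots $r_u$ of the subtrees $T_u(G \setminus v)$ for $u \in N(v)$, and each $r_u$ carries the label $u \in V(G)$ under $\phi_v$, so the associated edge variable in $\mu_V^{\phi_v}$ is $x_v x_u$, matching the variable product on the graph side.

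The argument proceeds by induction on $|V(G)|$, with the base case $|V|=1$ trivial since $T_v(G) = G$ and both ratios equal $1$. For the inductive step, I would divide the tree-side recursion by $\mu_V(T \setminus v) = \prod_{u \in N(v)} \mu_V(T_u(G \setminus v))$ (using multiplicativity of the relative matching polynomial over connected components), obtaining
\[
\frac{\mu_V(T)}{\mu_V(T \setminus v)} = 1 - x_v \sum_{u \in N(v)} x_u \frac{\mu_V(T_u(G \setminus v) \setminus r_u)}{\mu_V(T_u(G \setminus v))}.
\]
Applying the inductive hypothesis to $G \setminus v$ with root $u$ replaces each tree ratio by $\mu_V(G \setminus \{u,v\})/\mu_V(G \setminus v)$, and the resulting expression is exactly the normalization of the recursion for $\mu_V(G)$ above. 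This yields the claimed identity.

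For the divisibility claim, rearrange the identity as
\[
\mu_V(T) = \mu_V(G) \cdot \frac{\mu_V(T \setminus v)}{\mu_V(G \setminus v)}.
\]
Assuming $v$ is not isolated (else one restricts to the connected component of $v$), pick any $u \in N(v)$; by the inductive hypothesis, $\mu_V(G \setminus v)$ divides $\mu_V(T_u(G \setminus v))$, and the remaining factors $\mu_V(T_w(G \setminus v))$ for $w \neq u$ are polynomials, so the ratio $\mu_V(T \setminus v)/\mu_V(G \setminus v)$ is a polynomial and $\mu_V(G)$ divides $\mu_V(T)$.

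The main obstacle is the bookkeeping of the labeling $\phi_v$: since $\mu_V^{\phi_v}$ is valued in the variables indexed by $V(G)$ rather than the vertices of the tree, each recursive step must verify that the labeling restricts correctly to each subtree $T_u(G \setminus v)$, so that applying the inductive hypothesis there produces ratios in the same variables $x_w$ used on the graph side. Once this compatibility of the labeling with the recursion is confirmed, the multivariate proof is a direct translation of Godsil's original univariate argument.
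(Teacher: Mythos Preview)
Your argument is correct and is essentially Godsil's original induction, carried out with the labeled variables. The paper, however, takes a genuinely different route: it does not prove this theorem directly at all, but instead derives it as a corollary of the more general independence-polynomial result. Concretely, the paper first proves the analogous ratio identity $\frac{I(G)}{I(G\setminus K)} = \frac{I(T^\boxtimes_K(G))}{I(T^\boxtimes_K(G)\setminus K)}$ for a simplicial graph $G$ with simplicial clique $K$, together with the divisibility $I(G)\mid I(T^\boxtimes_K(G))$; it then invokes the commutative diagram $L\circ T_v \cong T^\boxtimes_{K_v}\circ L$ and the identity $\mu_E(G)=I(L(G))$ to obtain the edge-matching version, from which the vertex-matching statement follows. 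Your direct approach is more elementary and self-contained, and it makes the role of the labeling $\phi_v$ explicit; the paper's approach buys unification, exhibiting Godsil's theorem as a special case of the independence-polynomial machinery that is the paper's main theme.

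One small gap worth patching in your divisibility paragraph: when you invoke the inductive hypothesis to say that $\mu_V(G\setminus v)$ divides $\mu_V(T_u(G\setminus v))$ for a single chosen neighbor $u$, this is only literally true when $G\setminus v$ is connected, since $T_u(G\setminus v)$ sees only the component of $u$. With $G$ connected, each component of $G\setminus v$ contains at least one neighbor of $v$; picking one neighbor per component and applying the inductive hypothesis componentwise (the components involve disjoint variable sets, so the corresponding factors are coprime) gives the needed divisibility of $\mu_V(T\setminus v)$ by $\mu_V(G\setminus v)$.
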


By univariate specialization, this gives us the first step toward the well-known Heilmann and Lieb root bound (up to inversion of the input variable). We now attempt to generalize this divisibility to independence polynomials. First, however, we will need to develop some path tree analogues.

\subsection{Path Tree Analogues}

\subsubsection*{Induced Path Trees}

Given a graph $G$ and a vertex $v$, the \emph{induced path tree} $T^\angle_v(G)$ of $G$ with respect to $v$ is intuitively defined as follows: it is the path tree that is constructed when only \emph{induced} paths are considered. That is, we use the recursive process of creating the usual path tree, only we forbid traversal of vertices which are neighbors of previously traversed vertices. So, another name that could be used for this tree is the ``neighbor-avoiding'' path tree.

We now give an explicit definition of the induced path tree. The crucial difference between this definition and the definition of the path tree given above is that neighbors of a vertex are excluded in the recursive step.

\begin{definition}
Given a graph $G$ and vertex $v$, we define the \emph{induced path tree} $T^\angle_v(G)$ of $G$ with respect to $v$ recursively as follows. If $G$ is a tree, we define $T^\angle_v(G) = G$, and we say that $v$ is the root of $T^\angle_v(G)$.

For an arbitrary graph $G$, we first consider the forest which is the disjoint union of the trees $T^\angle_w(G \setminus N[v] \cup \{w\})$ for each $w \in N(v)$. We then define $T^\angle_v(G)$ by appending a vertex corresponding to $v$ (the root) and connecting it to the roots of each of these trees.
\end{definition}


We also define a slightly different version of the induced path tree. As will be seen, this adjusted definition is more appropriate for our purposes.

\begin{definition}
Given a graph $G$ and a clique $K$, the \emph{induced path tree} $T^\angle_K(G)$ of $G$ with respect to $K$ is defined as follows. Construct a new graph $G^*$ by attaching a new vertex $*$ to $G$, with the property that $\{*,u\} \in E(G^*)$ iff $u \in K$. Then, define $T^\angle_K(G) := T^\angle_{\{*\}}(G^*)$.
\end{definition}

\begin{remark}
As with the path tree, we can label the vertices of the induced path tree in a natural way. This gives rise to graph homomorphisms $\phi_v: T^\angle_v(G) \rightarrow G$ and $\phi_K: T^\angle_K(G) \rightarrow G^*$.
\end{remark}

\subsubsection*{Simplicial Clique Trees}

We need two graph theoretic concepts before defining our final path tree analogue. Given a graph $G$, let $K \leq G$ be an induced clique. Then, $K$ is called a \emph{simplicial clique} if for all $u \in K$, $N[u] \cap (G \setminus K)$ is a clique as an induced subgraph of $G$ (or equivalently, as an induced subgraph of $G \setminus K$). Intuitively, this means that neighborhoods of each $u \in K$ are two cliques joined at $u$: one is $K$ itself, and the other consists of the remaining neighbors of $u$. Simplicial cliques have been studied frequently in relation to the independence polynomial of a graph, and in particular, they were used in Chudnovsky and Seymour's original proof of real-rootedness for claw-free graphs.

We further say that a graph $G$ is \emph{simplicial} if it is claw-free and contains a simplicial clique. It may at first seem strange as to why ``claw-free'' is included in this definition. The main reason is the useful recursive structure that can be extracted from the following lemma.

\begin{lemma}[\cite{CS}]
Let $G$ be claw-free, and let $K \leq G$ be a simplicial clique in $G$. For any $u \in K$, $N[u] \cap (G \setminus K)$ is a simplicial clique in $G \setminus K$.
\end{lemma}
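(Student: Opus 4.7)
The plan is to unwrap the definition of simplicial clique and verify its two clauses for the set $C := N[u] \cap (G \setminus K)$, viewed as a candidate simplicial clique in $G \setminus K$. Since $u \in K$, this set equals $N(u) \setminus K$. The first clause, that $C$ is an induced clique, is immediate from the hypothesis that $K$ is simplicial in $G$ applied at the vertex $u \in K$: the definition says exactly that $N[u] \cap (G \setminus K)$ is a clique.

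For the second clause I need to show that for every $x \in C$, the set $N_{G \setminus K}[x] \cap ((G \setminus K) \setminus C)$ is a clique. Since $x \in C$ (so $x$ is excluded), this set simplifies to $N_G(x) \setminus (K \cup C)$, and using $K \cup C = K \cup N(u)$ it becomes $N_G(x) \setminus (K \cup N(u))$. I would fix $x \in C$, pick two arbitrary vertices $y, z$ in this set, and try to show $y \sim z$.

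The key idea is to exhibit $u$ as a third ``leaf'' of a would-be claw centered at $x$. We have $x \sim u$ because $x \in C \subseteq N(u)$, while $x \sim y$ and $x \sim z$ by choice of $y, z$. Also $u \not\sim y$ and $u \not\sim z$ because $y, z \notin N(u)$. So if the edge $yz$ were absent, the induced subgraph on $\{x, u, y, z\}$ would have exactly the three edges $xu, xy, xz$ with pairwise non-adjacent leaves $u, y, z$, which is precisely an induced $K_{1,3}$. Claw-freeness of $G$ forbids this, so $y \sim z$.

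The only subtle point, and the main potential obstacle, is bookkeeping: one must confirm that $u, x, y, z$ are four distinct vertices and that no hidden adjacencies ruin the claw picture. Distinctness follows from $y, z \notin K \ni u$ together with $y, z \in N_G(x)$ forcing $y, z \neq x$, and the crucial non-adjacencies $u \not\sim y$, $u \not\sim z$ are encoded directly in the set-membership constraints on $y$ and $z$. With these checks in hand, claw-freeness supplies the edge $yz$ and completes the verification of the simplicial property.
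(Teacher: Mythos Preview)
Your proof is correct. The paper does not actually prove this lemma; it is stated with a citation to Chudnovsky--Seymour \cite{CS} and used as a black box in the construction of the simplicial clique tree. Your argument is the natural direct verification: the clique property of $C$ is immediate from simpliciality of $K$ at $u$, and the simplicial property of $C$ at each $x \in C$ follows by using $u$ as the third leaf of a would-be claw centered at $x$, exactly as you describe. The distinctness and non-adjacency checks you flag are all in order, and this is essentially the standard proof one finds in the literature.
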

\begin{remark}
One can easily check that our definition of a simplicial graph is equivalent to having a recursive structure of simplicial cliques as indicated in the previous lemma.
\end{remark}

A \emph{block graph} (or \emph{clique tree}) is a graph in which every maximal 2-connected subgraph is a clique \cite{block}. As it turns out, block graphs are precisely the line graphs of trees. From this observation we note that there is a natural tree-like recursive structure on block graphs. Specifically, let $B$ be a block graph, and let $K$ be a clique in $B$. Then, $B \setminus K$ is a ``forest of block graphs''. That is, if we refer to $K$ as the ``root clique'' in $B$, then each ``root clique'' in the forest $B \setminus K$ is connected to some vertex of $K$ in $B$.

We now define a special kind of clique tree. Notice that while the term ``tree'' is used, the graphs defined here are not actually trees in the usual sense.

\begin{definition}
Given a simplicial graph $G$ and simplicial clique $K \leq G$, we define the \emph{(simplicial) clique tree} $T^\boxtimes_K(G)$ of $G$ with respect to $K$ recursively as follows. If $G = K$, we define $T^\boxtimes_K(G) = G$, and we say that $K$ is the ``root clique'' of $T^\boxtimes_K(G)$.

For an arbitrary graph $G$, we first consider the ``forest of simplicial clique trees'' which is the disjoint union of $T^\boxtimes_{J_u}(G \setminus K)$ for each $u \in K$. (Here, we define $J_u := N[u] \cap (G \setminus K)$.) Note that this is valid, since the previous lemma implies $J_u$ is a simplicial clique for all $u \in K$. We then define $T^\boxtimes_K(G)$ by appending the clique $K$ (the root clique) and connecting each vertex $u \in K$ to each vertex of the root clique of $T^\boxtimes_{J_u}(G \setminus K)$.
\end{definition}
\begin{remark}
We can label the vertices of the (simplicial) clique tree in the usual way, and this gives rise to a natural graph homomorphism $\phi_K: T^\boxtimes_K(G) \rightarrow G$.
\end{remark}

For examples of the induced path tree and the simplicial clique tree, see Figures $\ref{fig:example1}$ and $\ref{fig:example2}$.

\subsection{Divisibility Relations}

Given the above definitions, the main goal of this section is to demonstrate the following theorem. Here, for $v \in G$ we define $K_v \leq L(G)$ via $K_v := L(\{e \in E(G): v \in e\})$. That is, $K_v$ can be thought of as ``the clique in $L(G)$ associated to $N[v]$''.

\begin{theorem} \label{thm:diagram}
Let $L$ be the line graph operator, $T_v$ the path tree operator with respect to $v$, $T^\angle_K$ the induced path tree operator with respect to $K$, and $T^\boxtimes_K$ the clique tree operator with respect to $K$. Then the following diagram commutes up to isomorphism.
\[
\begin{tikzcd}
    \{\text{graphs}\}
        \arrow{r}{T_v}
        \arrow{d}{L} &
    \{\text{trees}\}
        \arrow{d}{L} \\
    \{\text{simplicial graphs}\}
        \arrow{ru}{T^\angle_K}
        \arrow{r}{T^\boxtimes_K} &
    \{\text{simpl. block graphs}\}
\end{tikzcd}
\]
In the upper left triangle, commutativity is achieved for $K = K_v$.
\end{theorem}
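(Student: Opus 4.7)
The diagram splits into the upper triangle $T_v(G) \cong T^\angle_{K_v}(L(G))$ and the outer square $L(T_v(G)) \cong T^\boxtimes_{K_v}(L(G))$; the lower triangle $L(T^\angle_{K_v}(L(G))) \cong T^\boxtimes_{K_v}(L(G))$ then follows by applying $L$ to the upper one. I plan to prove the two main statements directly.

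For the upper triangle I identify both sides with the tree of simple paths in $G$ starting at $v$. Vertices of $T_v(G)$ are precisely such paths $(v, v_1, \ldots, v_k)$, the root being the trivial path. Setting $L(G)^* := L(G) \cup \{*\}$ with $*$ adjacent exactly to $K_v$, so that $T^\angle_{K_v}(L(G)) = T^\angle_*(L(G)^*)$, the vertices on the right correspond to induced paths $(*, e_1, \ldots, e_k)$ in $L(G)^*$ rooted at $*$. The classical correspondence between simple paths in $G$ and induced paths in $L(G)$ --- via $e_i := \{v_{i-1}, v_i\}$ with $v_0 := v$, using that consecutive $e_i$'s share $v_i$ while non-consecutive ones are disjoint --- supplies the bijection: the condition $e_1 \in K_v$ forces $v_0 = v$, and the induced-path condition at $*$ forces $v \notin \{v_1, \ldots, v_k\}$. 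Adjacency in both trees is extension by one step, so the bijection is a graph isomorphism.

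For the square I induct on $|V(G)|$, with the base case $V(G) = \{v\}$ giving the empty graph on both sides. In the inductive step I use the decompositions
\[
T_v(G) = \{v\} \cup \bigsqcup_{w \in N(v)} T_w(G \setminus v), \qquad T^\boxtimes_{K_v}(L(G)) = K_v \cup \bigsqcup_{e = \{v, w\} \in K_v} T^\boxtimes_{J_e}(L(G) \setminus K_v),
\]
with $v$ joined to each subtree's root on the left and each $e \in K_v$ joined to every vertex of $J_e$ on the right. Two identifications drive the recursion: $L(G) \setminus K_v = L(G \setminus v)$ (edges of $G$ not through $v$ are exactly the edges of $G \setminus v$), and $J_e = \{\{w, x\} : x \in N(w) \setminus \{v\}\}$, which is precisely the clique $K_w^{(G \setminus v)}$ of edges through $w$ in $G \setminus v$. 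Hence each recursive subtree equals $T^\boxtimes_{K_w^{(G \setminus v)}}(L(G \setminus v))$, which by the inductive hypothesis is isomorphic to $L(T_w(G \setminus v))$.

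The main obstacle is matching the gluings at the top level. On the $L(T_v(G))$ side, the root block $\{(v,w) : w \in N(v)\}$ shares the single vertex $(v,w)$ with the root star at $w$ inside $L(T_w(G \setminus v))$; this is just the standard description of $L(T)$ for a tree $T$ as a block graph indexed by vertices of $T$. On the $T^\boxtimes$ side, $K_v$ shares exactly $e = \{v, w\}$ with the clique $\{e\} \cup J_e$ at the top of $T^\boxtimes_{J_e}(L(G) \setminus K_v)$, via the rule that $e$ is joined to every vertex of $J_e$. Under the vertex bijection ``edge $(v,w)$ of $T_v(G) \leftrightarrow$ vertex $\{v,w\}$ of $L(G)$'' these gluings coincide, closing the induction.
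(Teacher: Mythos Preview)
Your argument for the upper triangle is correct and actually a bit cleaner than the paper's: you exhibit a direct bijection between simple $v$-paths in $G$ and induced $*$-paths in $L(G)^*$, whereas the paper proceeds by induction on $|V(G)|$. Your inductive proof of the outer square $L(T_v(G))\cong T^\boxtimes_{K_v}(L(G))$ is also sound.

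The gap is in your first sentence. The bottom-left node of the diagram is $\{\text{simplicial graphs}\}$, not the image of $L$, so the lower-right triangle is the assertion
\[
L(T^\angle_K(H))\;\cong\;T^\boxtimes_K(H)
\]
for \emph{every} simplicial graph $H$ and \emph{every} simplicial clique $K\leq H$. From the upper triangle and the outer square you can only deduce this when $H=L(G)$ and $K=K_v$ for some graph $G$ and some $v\in V(G)$. Since not every simplicial graph is a line graph---the paper's own example $W_6$ (the wheel on six vertices) is simplicial with simplicial clique $\{a,b\}$ but is explicitly noted not to be a line graph---your deduction does not reach the full statement. This matters for the applications: the root-bound corollary downstream invokes $T^\boxtimes_K(G)=L(T^\angle_K(G))$ for an arbitrary simplicial $G$, so the general lower triangle is exactly what is used.

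The paper's route is instead to prove the two triangles and let the outer square follow. Its proof of the lower triangle is a short induction using the natural grading on $T^\angle_K(G)$: the first grades of $L\circ T^\angle_K(G)$ and of $T^\boxtimes_K(G)$ are both cliques of size $|K|$, and beneath each $v\in K$ one finds $L\circ T^\angle_{N[v]\setminus K}(G\setminus K)$ on one side and $T^\boxtimes_{N[v]\setminus K}(G\setminus K)$ on the other, which agree by induction. You could either replace your outer-square argument with this, or keep it and add a separate proof of the lower triangle for general simplicial $H$.
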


This can be broken down into a few results, which we give now.

\begin{lemma}
For any graph $G$ and any $v \in G$, $K_v$ is a simplicial clique of $L(G)$. In particular, $L(G)$ is simplicial.
\end{lemma}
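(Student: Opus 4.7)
The plan is to directly unpack the definitions. First I would observe that $K_v$ consists of the vertices of $L(G)$ corresponding to edges of $G$ incident to $v$, and any two such edges share the vertex $v$, hence are adjacent in $L(G)$. So $K_v$ is automatically a clique. It is induced because the neighborhood structure in $L(G)$ faithfully reflects shared endpoints in $G$.

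Next, to verify simpliciality, I would fix $u \in K_v$, which corresponds to an edge $e = \{v,w\}$ of $G$, and analyze $N[u] \cap (L(G) \setminus K_v)$. A vertex $u' \in L(G)$ corresponding to an edge $e' \in E(G)$ is a neighbor of $u$ iff $e'$ shares an endpoint with $e$, i.e. $v \in e'$ or $w \in e'$. Removing $K_v$ kills exactly the edges containing $v$, so what remains in $N[u] \cap (L(G) \setminus K_v)$ are precisely the edges of $G$ containing $w$ but not $v$. All such edges share the vertex $w$, so the corresponding vertices in $L(G)$ form a clique. This establishes the simplicial property of $K_v$.

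For the ``in particular'' clause, I would recall the standard fact that line graphs are claw-free: if $\{e_1,e_2,e_3,e_4\}$ were an induced claw in $L(G)$ with center $e_1$, then $e_1$ would share an endpoint with each of $e_2, e_3, e_4$ while no pair $e_i, e_j$ ($i,j \in \{2,3,4\}$) share an endpoint. A simple pigeonhole on the two endpoints of $e_1$ forces two of $e_2, e_3, e_4$ to meet at the same endpoint of $e_1$, contradicting their non-adjacency. Combining claw-freeness with the existence of the simplicial clique $K_v$ yields that $L(G)$ is simplicial in the sense of the definition.

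I do not expect any real obstacle here; the statement is essentially a bookkeeping exercise matching the definition of simplicial clique against the standard combinatorics of line graphs. The only mild subtlety is being careful that $N[u] \cap (L(G) \setminus K_v)$ excludes $u$ itself (since $u \in K_v$), so that no edges incident to $v$ slip back in — but this is exactly what the construction forces.
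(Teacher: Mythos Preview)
Your proposal is correct and follows essentially the same approach as the paper: both show $K_v$ is a clique since all its edges contain $v$, then fix an element of $K_v$ corresponding to an edge $\{v,w\}$ and observe that its neighbors outside $K_v$ must all contain $w$ and hence form a clique. The paper simply cites claw-freeness of line graphs as well known, whereas you sketch the pigeonhole argument, but otherwise the arguments are identical.
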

\begin{proof}
It is easy to see that $K_v$ is a clique. If we consider $w \in K_v$, this corresponds to an edge $e_w \in E(G)$ that has $v$ as an endpoint. Then given any two neighbors of $w$ that are not in $K_v$, we know they correspond to two edges which share an endpoint with $e_w$ but do not have $v$ as an endpoint. Hence they both share the other endpoint of $e_w$ and are therefore connected in the line graph. This shows that $N[w] \setminus K_v$ is a clique, so $K_v$ is a simplicial clique.

It is well known that line graphs are claw-free, so all line graphs are simplicial.
\end{proof}

\begin{proposition}
For any (nonempty) graph $G$ and any $v \in V$, the induced path tree of $L(G)$ with respect to $K_v$ is isomorphic to the path tree of $G$ with respect to $v$. That is, $T^\angle_{K_v}(L(G)) \cong T_v(G)$.
\end{proposition}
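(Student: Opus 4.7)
The plan is to prove the isomorphism by induction on $|V(G)|$, matching the recursive definitions of the two operators level by level. The base case, $|V(G)| = 1$, is immediate: both $T_v(G)$ and $T^\angle_{K_v}(L(G))$ collapse to a single root vertex (in the latter case because $L(G)$ is empty, so $(L(G))^*$ is just the isolated vertex $*$).

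For the inductive step, I unfold both definitions at the root. On the one side, $T_v(G)$ has root $v$ with children being the roots of $T_u(G \setminus \{v\})$, one for each $u \in N_G(v)$. On the other side, $T^\angle_{K_v}(L(G)) = T^\angle_{\{*\}}((L(G))^*)$, whose root $*$ has children indexed by $N_{(L(G))^*}(*) = K_v$; these vertices $w \in K_v$ are in natural bijection with the neighbors $u$ of $v$ in $G$ via $w \leftrightarrow e = \{v,u\}$. So the number of children matches, and it remains to identify the subtrees. By the recursive definition of $T^\angle$, the subtree below $w$ is $T^\angle_w\bigl((L(G))^* \setminus N[*] \cup \{w\}\bigr)$.

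The heart of the argument is the following structural identification, which I would state and prove as a small lemma. Let $u \in N_G(v)$ and let $w \in K_v$ be the corresponding vertex of $L(G)$. Then the graph $(L(G))^* \setminus N[*] \cup \{w\}$, with $w$ regarded as the new ``attached'' vertex, is isomorphic to $(L(G \setminus \{v\}))^*$ with $*$ attached precisely to the clique $K_u \leq L(G \setminus \{v\})$. The verification is direct: the remaining vertices are exactly the edges of $G$ not incident to $v$, which are the vertices of $L(G \setminus \{v\})$, with the same adjacencies; and the neighbors of $w$ in the reduced graph are exactly the edges of $G$ sharing an endpoint with $e = \{v,u\}$ other than $v$, i.e.\ the edges of $G \setminus \{v\}$ incident to $u$, which are the vertices of $K_u$. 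Consequently, by the definition of the clique-version of the induced path tree, $T^\angle_w\bigl((L(G))^* \setminus N[*] \cup \{w\}\bigr) \cong T^\angle_{K_u}(L(G \setminus \{v\}))$, and the inductive hypothesis applied to the smaller graph $G \setminus \{v\}$ gives $T^\angle_{K_u}(L(G \setminus \{v\})) \cong T_u(G \setminus \{v\})$. Assembling the subtrees with the bijection $w \leftrightarrow u$ yields the desired isomorphism.

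The main obstacle is bookkeeping rather than any deep idea: one must be careful with the two-layer definition of $T^\angle_K$ (through the auxiliary graph $G^*$ and then $T^\angle_{\{*\}}$), and verify that the removal of $N[*]$ followed by the reattachment of a single $w$ exactly reproduces the auxiliary-graph construction applied one level down to $G \setminus \{v\}$ with clique $K_u$. Getting the vertex labels to match up consistently (so that the isomorphism commutes with the natural homomorphisms $\phi_v$ and $\phi_{K_v}$) is a routine but notation-heavy check, which I would handle by carrying the labeling along explicitly in the inductive argument.
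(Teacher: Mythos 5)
Your proof is correct and follows essentially the same strategy as the paper's: induction on $|V(G)|$, peeling off the root layer of both trees, identifying the children of $v$ in $T_v(G)$ with the children of $*$ in $T^\angle_{K_v}(L(G))$ via $u \leftrightarrow \{v,u\}$, and reducing the subtrees to the inductive hypothesis on $G \setminus \{v\}$. Your formulation is slightly more explicit than the paper's in that you unfold $T^\angle_{K}(H)$ through the auxiliary graph $H^*$ and state the key identification $(L(G))^* \setminus N[*] \cup \{w\} \cong (L(G\setminus\{v\}))^*$ as a standalone lemma, whereas the paper phrases the same step more compactly via $L(G\setminus\{v\}) \cong L(G)\setminus K_v$ and $J_u := K_u \cap (L(G)\setminus K_v)$; but these are cosmetic differences in bookkeeping, not in the argument.
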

\begin{proof}
First, let $G$ be the graph with one vertex, $v$. Then, $L(G)$ is the empty graph and $T^\angle_{K_v} \circ L(G)$ is also the graph with one vertex (recall that the operator $T^\angle_{K_v}$ adds an extra vertex to the input graph). On the other hand, $T_v(G)$ is the graph with one vertex, and the result holds in this case.

Now, let $G$ be a connected graph consisting of two or more vertices, and let $v$ be some vertex of $G$. (We can assume WLOG that $G$ is connected, since $T_v$ and $T^\angle_{K_v}$ only deal with connected components of $v$ and $K_v$, respectively.) We proceed inductively, adopting the convention that $K_u \leq L(G)$ and $K'_u \leq L(G \setminus \{v\})$ are the cliques associated to $N[u]$ in the respective line graphs.

We first consider $T_v(G)$. For each $u \in N(v)$, we have that $T_u(G \setminus \{v\})$ is naturally a subtree of $T_v(G)$. In fact, $T_v(G)$ can be viewed as the disjoint union of $T_u(G \setminus \{v\})$ for all $u \in N(v)$, connected to a single vertex corresponding to $v$.

We next consider $T^\angle_{K_v} \circ L(G)$. Notice that $L(G \setminus \{v\}) \cong L(G) \setminus K_v$. For any $u \in N(v)$, this implies $T^\angle_{K'_u} \circ L(G \setminus \{v\}) \cong T^\angle_{J_u}(L(G) \setminus K_v)$, where $J_u := K_u \cap (L(G) \setminus K_v)$. Recall that the $T^\angle_K$ operator adds an extra vertex attached to each vertex of $K$. So, we can view $T^\angle_{K_v} \circ L(G)$ as the disjoint union of $T^\angle_{J_u}(L(G) \setminus K_v)$ for all $u \in N(v)$, along with an extra vertex connected to each of the added extra vertices in the disjoint union.

By the induction hypothesis, we have $T_u(G \setminus \{v\}) \cong T^\angle_{K'_u} \circ L(G \setminus \{v\})$ for all $u \in N(v)$. This implies that the two descriptions given above of $T_v(G)$ and $T^\angle_{K_v} \circ L(G)$, respectively, are equivalent. Therefore, $T_v(G) \cong T^\angle_{K_v} \circ L(G)$.
\end{proof}

\begin{proposition}
For any simplicial graph $G$ and any simplicial clique $K \leq G$, the line graph of the induced path tree of $G$ with respect to $K$ is isomorphic to the clique tree of $G$ with respect to $K$. That is, $L(T^\angle_K(G)) \cong T^\boxtimes_K(G)$.
\end{proposition}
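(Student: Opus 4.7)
The plan is to induct on $|V(G) \setminus K|$. In the base case $V(G) = K$, the graph $G^*$ is a star with center $*$ and leaf set $K$, so $T^\angle_K(G) = T^\angle_{\{*\}}(G^*)$ is itself a star; its line graph is the $|K|$-clique labeled by $K$, which coincides with $T^\boxtimes_K(K) = K$.

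For the inductive step, I would unfold the outermost recursion on both sides. By construction $T^\angle_K(G)$ is the tree with root $*$ attached by ``root edges'' $\{*,u\}$ to the roots of the subtrees $T^\angle_u((G \setminus K) \cup \{u\})$, one for each $u \in K$. A direct comparison of the recursive definitions shows that each such subtree is isomorphic as a rooted tree to $T^\angle_{J_u}(G \setminus K)$, with the role of the added center vertex played by the root labeled $u$: both are built by attaching a single ``center'' vertex to the children subtrees $T^\angle_w((G \setminus (K \cup J_u)) \cup \{w\})$ for $w \in J_u$.

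Now examine $L(T^\angle_K(G))$. The $|K|$ root edges all meet at $*$, so in the line graph they form a clique, which I identify with $K$ via the labeling $\{*,u\} \mapsto u$. Outside this clique the line graph decomposes as a disjoint union over $u \in K$ of the induced subgraphs $L(T^\angle_u((G \setminus K) \cup \{u\})) \cong L(T^\angle_{J_u}(G \setminus K))$. Since $|V(G \setminus K)| < |V(G)|$ and $J_u$ is a simplicial clique of $G \setminus K$ by the lemma of \cite{CS} quoted earlier, the inductive hypothesis identifies this piece with $T^\boxtimes_{J_u}(G \setminus K)$. Under this identification, the edges of the subtree incident to its root $u$ (namely $\{u,w\}$ for $w \in J_u$) correspond to the root clique $J_u$ of $T^\boxtimes_{J_u}(G \setminus K)$. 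The only edges of $L(T^\angle_K(G))$ joining the root clique $K$ to this piece are those sharing the endpoint $u$ in $T^\angle_K(G)$, which are precisely the edges from the root-clique vertex $u$ to the $J_u$ root clique just described. This matches verbatim the recursive definition of $T^\boxtimes_K(G)$, closing the induction.

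The main bookkeeping difficulty I expect is the rooted-tree isomorphism $T^\angle_u((G \setminus K) \cup \{u\}) \cong T^\angle_{J_u}(G \setminus K)$ together with the parallel matching of ``root cliques'' across the three constructions $T^\angle$, $L$, and $T^\boxtimes$; once these identifications are pinned down, the inductive step is largely a repackaging of definitions.
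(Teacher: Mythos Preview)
Your proposal is correct and follows essentially the same inductive strategy as the paper: both arguments peel off the first level (the root clique $K$ coming from the star edges at $*$), identify the remaining pieces with $L(T^\angle_{J_u}(G\setminus K))$ for each $u\in K$, and invoke the recursive definitions to close the induction. Your explicit identification $T^\angle_u((G\setminus K)\cup\{u\}) \cong T^\angle_{J_u}(G\setminus K)$ is the bookkeeping the paper handles via its ``grading'' language, and your base case $V(G)=K$ is a convenient strengthening of the paper's single-vertex base case; otherwise the proofs coincide.
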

\begin{proof}
There is a natural grading on the edges of $T^\angle_K(G)$, where the edges from $*$ to vertices in $K$ have grading $1$, and edges from vertices $v \in K$ to vertices in $N[v] \setminus K$ have grading $2$, and so forth. Then under the line graph operation we get a grading on the vertices of $L \circ T^\angle_K(G)$. 

Similarly $T^\boxtimes_K(G)$ has a natural grading on the vertices by grading $K$ as grade $1$, and for every vertex $v \in K$, grading the clique $N[v] \setminus K$ as grade $2$, and so forth. 

Now we can induct on the number of vertices in $G$. The result is obviously true for the graph with one vertex. It is then clear that the first grades of $L \circ T^\angle_K(G)$ and $T^\boxtimes_K(G)$ are isomorphic: they are both cliques of size $K$. We then label the vertices of the first grade in $L \circ T^\angle_K(G)$ by vertices in $K$ as follows. Each vertex of the first grade comes from an edge in $T^\angle_K(G)$ of the form $\{*, v\}$, for some $v \in K$. So, we label this first-grade vertex in $L \circ T^\angle_K(G)$ by ``$v$''.

In $L \circ T^\angle_K(G)$, this vertex labeled ``$v$'' connects to edges in $G$ from $v$ to vertices in $N[v] \setminus K$ in $T^\angle_K(G)$. In this way we see viewing the sub-clique tree (obtained by looking at $v$ and all of the grades below it) rooted at the vertex labeled $v$ in $L \circ T^\angle_K(G)$  is $L \circ T^\angle_{N[v] \setminus K}(G)$. Likewise by looking at the vertex labeled $v$ in $T^\boxtimes_K(G)$ we see the sub-clique tree obtained by looking at $v$ and all grades below it is exactly $T^\boxtimes_{N[v]\setminus K}(G)$, by definition of the simplicial clique tree. By induction our claim is proved.
\end{proof}

\begin{figure}
    \centering
    \begin{tabular}{ cc }
\begin{tikzpicture}
\node[draw=black,minimum size=2cm,regular polygon,regular polygon sides=3] (p) {};

\node[label=left:{a}, draw=black,fill,circle,inner sep=0pt,minimum size=3pt] (a) at (p.corner 1) {};

\node[label=left:{b}, draw=black,fill,circle,inner sep=0pt,minimum size=3pt] (b) at (p.corner 2) {};

\node[label=right:{c}, draw=black,fill,circle,inner sep=0pt,minimum size=3pt] (c) at (p.corner 3) {};

\node[label={v}, draw=black,fill,circle,inner sep=0pt,minimum size=3pt] (v) at (0,2.3) {};

\draw (v) -- (a);
\end{tikzpicture}
  & 

\begin{tikzpicture}[scale=1.23]
\node[draw=none,minimum size=1cm,regular polygon,regular polygon sides=3] (p) {};

\node[label={*}, draw=black,fill,circle,inner sep=0pt,minimum size=3pt] (*) at (0,1.3) {};

\node[label=left:{s}, draw=black,fill,circle,inner sep=0pt,minimum size=3pt] (x) at (p.corner 1) {};

\node[label=left:{y}, draw=black,fill,circle,inner sep=0pt,minimum size=3pt] (y) at (p.corner 2) {};

\node[label=right:{z}, draw=black,fill,circle,inner sep=0pt,minimum size=3pt] (z) at (p.corner 3) {};

\node[label=right:{w}, draw=black,fill,circle,inner sep=0pt,minimum size=3pt] (w1) at (0.45,-1) {};

\node[label=left:{w}, draw=black,fill,circle,inner sep=0pt,minimum size=3pt] (w2) at (-0.45,-1) {};

\draw (*) -- (x);
\draw (x) -- (y);
\draw (x) -- (z);
\draw (y) -- (w2);
\draw (z) -- (w1);


\end{tikzpicture}
\\ 

$P$ & $T_v(P) \cong T^\angle_{\{s\}}(L(P))$ \\

\begin{tikzpicture}
\node[draw=black,minimum size=2cm,regular polygon,regular polygon sides=3] (p) {};

\node[label={s}, draw=black,fill,circle,inner sep=0pt,minimum size=3pt] (x) at (p.corner 1) {};

\node[label=left:{y}, draw=black,fill,circle,inner sep=0pt,minimum size=3pt] (y) at (p.corner 2) {};

\node[label=right:{z}, draw=black,fill,circle,inner sep=0pt,minimum size=3pt] (z) at (p.corner 3) {};

\node[label=below:{w}, draw=black,fill,circle,inner sep=0pt,minimum size=3pt] (w) at (0,-2) {};

\draw (y) -- (w);
\draw (z) -- (w);
\end{tikzpicture}

&

\begin{tikzpicture}[scale=1.5]
\node[draw=black,minimum size=2cm,regular polygon,regular polygon sides=3] (p) {};

\node[label={s}, draw=black,fill,circle,inner sep=0pt,minimum size=3pt] (x) at (p.corner 1) {};

\node[label=left:{y}, draw=black,fill,circle,inner sep=0pt,minimum size=3pt] (y) at (p.corner 2) {};

\node[label=right:{z}, draw=black,fill,circle,inner sep=0pt,minimum size=3pt] (z) at (p.corner 3) {};

\node[label=right:{w}, draw=black,fill,circle,inner sep=0pt,minimum size=3pt] (w1) at (0.66,-1.4) {};

\node[label=left:{w}, draw=black,fill,circle,inner sep=0pt,minimum size=3pt] (w2) at (-0.66,-1.4) {};

\draw (y) -- (w2);
\draw (z) -- (w1);
\end{tikzpicture} \\

$L(P)$ & $L(T_v(P)) \cong T^\boxtimes_{\{s\}}(L(P))$ \\ 

    \end{tabular}
    \caption{An example of a graph and its line graph, induced path tree and simplicial clique tree as in Theorem \ref{thm:diagram}.}
    \label{fig:example1}
\end{figure}

\begin{figure}
    \centering
    \begin{tabular}{ cc }
      & 
\begin{tikzpicture}
\node[draw=none,minimum size=2cm,regular polygon,regular polygon sides=3] (t1) {};
\node[label={*}, draw=black,fill,circle,inner sep=0pt,minimum size=3pt] (s1) at (t1.corner 1) {};
\node[label=left:{b}, draw=black,fill,circle,inner sep=0pt,minimum size=3pt] (b1) at (t1.corner 2) {};
\node[label=right:{a}, draw=black,fill,circle,inner sep=0pt,minimum size=3pt] (a1) at (t1.corner 3) {};

\node[draw=none,minimum size=1cm,regular polygon,regular polygon sides=3] (t2) at (-0.7,-1.7) {};
\node[label=right:{v}, draw=black,fill,circle,inner sep=0pt,minimum size=3pt] (v2) at (t2.corner 1) {};
\node[label=below:{d}, draw=black,fill,circle,inner sep=0pt,minimum size=3pt] (d2) at (t2.corner 2) {};
\node[label=below:{e}, draw=black,fill,circle,inner sep=0pt,minimum size=3pt] (e2) at (t2.corner 3) {};

\node[draw=none,minimum size=1cm,regular polygon,regular polygon sides=3] (t3) at (0.7,-1.7) {};
\node[label=left:{v}, draw=black,fill,circle,inner sep=0pt,minimum size=3pt] (v3) at (t3.corner 1) {};
\node[label=below:{c}, draw=black,fill,circle,inner sep=0pt,minimum size=3pt] (c3) at (t3.corner 2) {};
\node[label=below:{d}, draw=black,fill,circle,inner sep=0pt,minimum size=3pt] (d3) at (t3.corner 3) {};

\node[label=left:{c}, draw=black,fill,circle,inner sep=0pt,minimum size=3pt] (c2) at (-1.5,-1.2) {};
\node[label=right:{e}, draw=black,fill,circle,inner sep=0pt,minimum size=3pt] (e3) at (1.5,-1.2) {};

\node[label=left:{d}, draw=black,fill,circle,inner sep=0pt,minimum size=3pt] (d4) at (-2,-1.95) {};
\node[label=left:{e}, draw=black,fill,circle,inner sep=0pt,minimum size=3pt] (e4) at (-2,-2.7) {};

\node[label=right:{d}, draw=black,fill,circle,inner sep=0pt,minimum size=3pt] (d5) at (2,-1.95) {};
\node[label=right:{c}, draw=black,fill,circle,inner sep=0pt,minimum size=3pt] (c5) at (2,-2.7) {};

\draw (s1) -- (a1);
\draw (s1) -- (b1);
\draw (b1) -- (c2);
\draw (b1) -- (v2);

 \draw (a1) -- (v3);
\draw (a1) -- (e3);
\draw (v3) -- (c3);
\draw (v3) -- (d3);

\draw (v2) -- (d2);
\draw (v2) -- (e2);

\draw (c2) -- (d4);
\draw (d4) -- (e4);

\draw (e3) -- (d5);
\draw (d5) -- (c5);

\end{tikzpicture}
\\ 

 & $T^\angle_{\{a,b\}}(W_6)$ \\

\begin{tikzpicture}
\node[draw=black,minimum size=3cm,regular polygon,regular polygon sides=5] (p) {};

\node[label={a}, draw=black,fill,circle,inner sep=0pt,minimum size=3pt] (a) at (p.corner 1) {};

\node[label={b}, draw=black,fill,circle,inner sep=0pt,minimum size=3pt] (b) at (p.corner 2) {};

\node[label=below:{c}, draw=black,fill,circle,inner sep=0pt,minimum size=3pt] (c) at (p.corner 3) {};

\node[label=below:{d}, draw=black,fill,circle,inner sep=0pt,minimum size=3pt] (d) at (p.corner 4) {};

\node[label={e}, draw=black,fill,circle,inner sep=0pt,minimum size=3pt] (e) at (p.corner 5) {};

\node[label=below:{v}, draw=black,fill,circle,inner sep=0pt,minimum size=3pt] (v) at (0,0) {};

\draw (a) -- (v);
\draw (b) -- (v);
\draw (c) -- (v);
\draw (d) -- (v);
\draw (e) -- (v);
\end{tikzpicture}
 & 
\begin{tikzpicture}
\node[draw=none,minimum size=2cm,regular polygon,regular polygon sides=3] (t1) {};
\node[label=left:{b}, draw=black,fill,circle,inner sep=0pt,minimum size=3pt] (b1) at (t1.corner 2) {};
\node[label=right:{a}, draw=black,fill,circle,inner sep=0pt,minimum size=3pt] (a1) at (t1.corner 3) {};

\node[draw=black,minimum size=1cm,regular polygon,regular polygon sides=3] (t2) at (-0.7,-1.7) {};
\node[label=right:{v}, draw=black,fill,circle,inner sep=0pt,minimum size=3pt] (v2) at (t2.corner 1) {};
\node[label=below:{d}, draw=black,fill,circle,inner sep=0pt,minimum size=3pt] (d2) at (t2.corner 2) {};
\node[label=below:{e}, draw=black,fill,circle,inner sep=0pt,minimum size=3pt] (e2) at (t2.corner 3) {};

\node[draw=black,minimum size=1cm,regular polygon,regular polygon sides=3] (t3) at (0.7,-1.7) {};
\node[label=left:{v}, draw=black,fill,circle,inner sep=0pt,minimum size=3pt] (v3) at (t3.corner 1) {};
\node[label=below:{c}, draw=black,fill,circle,inner sep=0pt,minimum size=3pt] (c3) at (t3.corner 2) {};
\node[label=below:{d}, draw=black,fill,circle,inner sep=0pt,minimum size=3pt] (d3) at (t3.corner 3) {};

\node[label=left:{c}, draw=black,fill,circle,inner sep=0pt,minimum size=3pt] (c2) at (-1.5,-1.2) {};
\node[label=right:{e}, draw=black,fill,circle,inner sep=0pt,minimum size=3pt] (e3) at (1.5,-1.2) {};

\node[label=left:{d}, draw=black,fill,circle,inner sep=0pt,minimum size=3pt] (d4) at (-2,-1.95) {};
\node[label=left:{e}, draw=black,fill,circle,inner sep=0pt,minimum size=3pt] (e4) at (-2,-2.7) {};

\node[label=right:{d}, draw=black,fill,circle,inner sep=0pt,minimum size=3pt] (d5) at (2,-1.95) {};
\node[label=right:{c}, draw=black,fill,circle,inner sep=0pt,minimum size=3pt] (c5) at (2,-2.7) {};

\draw (b1) -- (a1);
\draw (b1) -- (c2);
\draw (b1) -- (v2);
\draw (v2) -- (c2);

\draw (a1) -- (v3);
\draw (a1) -- (e3);
\draw (v3) -- (e3);

\draw (c2) -- (d4);
\draw (d4) -- (e4);

\draw (e3) -- (d5);
\draw (d5) -- (c5);

\end{tikzpicture}

\\ 
$W_6$ & $T^\boxtimes_{\{a,b\}}(W_6)$ \\
    \end{tabular}
    \caption{An example of a graph, its induced path tree and simplicial clique tree. $W_6$ is not a line graph.}
    \label{fig:example2}
\end{figure}

There are two comments to be made about this diagram. First, we can consider the induced path tree operator as some sort of ``inverse'' or ``adjoint'' to the line graph operator. In fact, for $G \in \{\text{trees}\}$ (resp. $G \in \{\text{simpl. block graphs}\}$) we have that $T^\angle_K$ is the left (resp. right) inverse of $L$.

Second, consider the outer rectangle of the diagram. We see that the line graph operator ``passes'' the path tree operator to the clique tree operator. So, if Godsil's divisibility relation can be shown to hold between a simplicial graph and its clique tree, we will be able to derive the same relation between a graph and its path tree as a corollary. (The corollary will actually be for the \emph{edge} matching polynomial. A simple argument then gives the result for the vertex matching polynomial, as we will see below.)

We now generalize Godsil's theorem.

\begin{theorem}
Let $K$ be a simplicial clique of the simplicial graph $G = (V,E)$, and let $T \equiv T^\boxtimes_K(G)$ be the clique tree of $G$ with respect to $K$. Further, let $I(T) \equiv I^{\phi_K}(T)$ denote the relative independence polynomial. We then have the following.
\[
\frac{I(G)}{I(G \setminus K)} = \frac{I(T)}{I(T \setminus K)}
\]
\end{theorem}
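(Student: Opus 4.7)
The plan is to mimic Godsil's original inductive argument, but performed one clique at a time rather than one vertex at a time. The key is a clique-analogue of the standard independence recursion, together with the recursive ``forest of clique trees'' structure of $T^\boxtimes_K(G)$.

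First, I would establish the fundamental recursion. Because $K$ is a clique, every independent set of $G$ contains at most one vertex of $K$, so
\[
I(G) = I(G \setminus K) + \sum_{u \in K} x_u\, I(G \setminus N[u]).
\]
The exact same argument, applied to the clique tree $T$ in which $K$ is still a clique (now the root clique), yields
\[
I(T) = I(T \setminus K) + \sum_{u \in K} x_u\, I(T \setminus N_T[u]).
\]
Dividing each identity by the corresponding ``$\setminus K$'' term reduces the theorem to showing, for every $u \in K$,
\[
\frac{I(G \setminus N[u])}{I(G \setminus K)} = \frac{I(T \setminus N_T[u])}{I(T \setminus K)}.
\]

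Next I would unpack both sides using the recursive definitions. Setting $J_w := N[w] \cap (G \setminus K)$ for each $w \in K$, the lemma from \cite{CS} guarantees that each $J_w$ is itself a simplicial clique in $G \setminus K$, and by definition $T \setminus K$ is the disjoint union of the subtrees $T^\boxtimes_{J_w}(G \setminus K)$ over $w \in K$. For the chosen $u$, removing $N_T[u]$ from $T$ deletes $u$, the rest of $K$, and the root clique of $T^\boxtimes_{J_u}(G \setminus K)$; what remains is the disjoint union of $T^\boxtimes_{J_u}(G \setminus K) \setminus J_u$ with the untouched subtrees $T^\boxtimes_{J_w}(G \setminus K)$ for $w \neq u$. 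Because the relative independence polynomial is multiplicative across disjoint unions (the variables only depend on the labels in $G$, so no conflicts arise), almost all factors cancel and we are left with
\[
\frac{I(T \setminus N_T[u])}{I(T \setminus K)} = \frac{I\bigl(T^\boxtimes_{J_u}(G \setminus K) \setminus J_u\bigr)}{I\bigl(T^\boxtimes_{J_u}(G \setminus K)\bigr)}.
\]
On the $G$ side, since $K \subseteq N[u]$, we have $G \setminus N[u] = (G \setminus K) \setminus J_u$, so the claim reduces to
\[
\frac{I((G \setminus K) \setminus J_u)}{I(G \setminus K)} = \frac{I\bigl(T^\boxtimes_{J_u}(G \setminus K) \setminus J_u\bigr)}{I\bigl(T^\boxtimes_{J_u}(G \setminus K)\bigr)}.
\]

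Finally, I would close the loop by induction on $|V(G)|$, with base case $G = K$ (where $T^\boxtimes_K(G) = G$ and both sides equal $I(K)/1$). Since $|V(G \setminus K)| < |V(G)|$ and $J_u$ is a simplicial clique of the simplicial graph $G \setminus K$, the inductive hypothesis applied to the pair $(G \setminus K, J_u)$ gives precisely the equality of the two ratios above, completing the proof.

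The main obstacle I anticipate is bookkeeping rather than mathematical: one must verify that the relative independence polynomials on the tree side really do factor correctly over disjoint components, which rests on the fact that the labeling homomorphism $\phi_K$ assigns variables of $T$ exclusively through their labels in $G$, so that $I(A \sqcup B) = I(A) \cdot I(B)$ holds as relative polynomials, and that the recursive sub-clique-tree $T^\boxtimes_{J_u}(G \setminus K)$ inherits its labeling compatibly. Once this compatibility is spelled out, the induction goes through cleanly.
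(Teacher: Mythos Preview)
Your proposal is correct and follows essentially the same route as the paper's own proof: induct on $|V(G)|$, expand $I(G)/I(G\setminus K)$ via the clique recursion $I(G)=I(G\setminus K)+\sum_{u\in K}x_uI(G\setminus N[u])$, use the disjoint-union structure $T\setminus K=\bigsqcup_{w\in K}T^\boxtimes_{J_w}(G\setminus K)$ to factor the tree ratios, and then invoke the inductive hypothesis on the pair $(G\setminus K,\,J_u)$. The only cosmetic difference is that the paper takes as base case the situation where $G$ is already a simplicial block graph (so $T=G$ outright), whereas you start from $G=K$; both work for the same induction.
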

\begin{proof}
We induct on $|V(G)|$. Note that if $G$ is a simplicial block graph, then $T = G$, and so the result is true. 

For the general case we get:
\begin{align*}
\frac{I(G)}{I(G \setminus K)} &= \frac{I(G \setminus K) + \sum_{v \in K} x_v I(G \setminus N[v])}{I(G \setminus K)} \\
&= 1 + \sum_{v \in K} \frac{x_v I(T^\boxtimes_{N[v]}(G \setminus K) \setminus N[v])}{I(T^\boxtimes_{N[v]}(G \setminus K))} \\
&= 1 + \sum_{v \in K} \frac{x_v I(T^\boxtimes_{N[v]}(G \setminus K) \setminus N[v]) \prod_{w \in K, w \neq v}I(T^\boxtimes_{N[w]}(G \setminus K))}{I(T^\boxtimes_K(G) \setminus K)} \\
&= 1 + \sum_{v \in K} \frac{x_v I(T^\boxtimes_K(G) \setminus N[v])}{I(T^\boxtimes_K(G) \setminus K)} \\
&= \frac{I(T^\boxtimes_K(G) \setminus K) + \sum_{v \in K} x_v I(T^\boxtimes_K(G) \setminus N[v])}{I(T^\boxtimes_K(G) \setminus K)} \\
&= \frac{I(T^\boxtimes_K(G))}{I(T^\boxtimes_K(G) \setminus K)}
\end{align*}

In the above we use the recursion formula for the independence polynomial expanding at a clique and the fact that $N[v]$ is a simplicial clique in $G \setminus K$ when $K$ is a simplicial clique. Notice also that the relative independence polynomial $I \equiv I^{\phi_K}$ is needed in order for the last equality to hold.
\end{proof}

\begin{remark}
We compute the independence polynomials of the appropriate graphs from Figure $\ref{fig:example1}$ to illustrate the divisibility relations proved in the preceding theorem: $I(L(P),x) = 1 + x_s + x_y + x_z + x_w + x_sx_w$, $I(T_{\{s\}}^\boxtimes(L(P))) = (1 + x_s + x_y + x_z + x_w + x_sx_w) \cdot (1 + x_w) = I(L(P),x) \cdot (1 + x_w)$
\end{remark}

The proof we gave for the previous theorem is essentially the one Godsil gives for his original theorem, except that we deal with simplicial cliques rather than vertices. The previous theorem now yields the following corollaries.

\begin{corollary}
$I(G)$ divides $I(T^\boxtimes_K(G))$ for any simplicial graph $G$ with simplicial clique $K$.
\end{corollary}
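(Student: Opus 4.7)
The plan is to deduce the corollary as a straightforward consequence of the preceding theorem via induction on $|V(G)|$. The base case is $G = K$: the recursive definition gives $T^\boxtimes_K(G) = G$, so $I(G)$ trivially divides $I(T^\boxtimes_K(G))$. Only the inductive step carries substance.

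The identity from the theorem can be rearranged as
$$I(T) \cdot I(G \setminus K) = I(G) \cdot I(T \setminus K), \qquad T := T^\boxtimes_K(G).$$
The right-hand side is manifestly divisible by $I(G)$, so the task reduces to cancelling the extraneous factor $I(G \setminus K)$. Since $\R[\mathbf{x}]$ is a unique factorization domain and $I(G \setminus K)$ has constant term $1$ (hence is nonzero), it would suffice to prove $I(G \setminus K) \mid I(T \setminus K)$; the UFD cancellation then yields $I(G) \mid I(T)$ directly.

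To establish this auxiliary divisibility, I would invoke the lemma of Chudnovsky and Seymour cited above: for each $v \in K$, the set $J_v := N[v] \cap (G \setminus K)$ is a simplicial clique of $G \setminus K$. The subgraph $G \setminus K$ is claw-free (as an induced subgraph of $G$) and now carries a simplicial clique, so it qualifies as a simplicial graph in the paper's sense, and it has strictly fewer vertices than $G$. The inductive hypothesis applied to the pair $(G \setminus K, J_v)$ therefore gives $I(G \setminus K) \mid I(T^\boxtimes_{J_v}(G \setminus K))$. By the recursive definition of the clique tree, $T \setminus K$ is the disjoint union of the graphs $T^\boxtimes_{J_v}(G \setminus K)$ ranging over $v \in K$, and because the independence polynomial is multiplicative over disjoint unions,
$$I(T \setminus K) = \prod_{v \in K} I(T^\boxtimes_{J_v}(G \setminus K)),$$
which is divisible by $I(G \setminus K)$ as required.

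The main obstacle is really just bookkeeping: one must verify that $G \setminus K$ genuinely qualifies as a simplicial graph so that the inductive hypothesis is applicable, and one must track the vertex labels of the various relative independence polynomials so that all divisibility assertions live in a common polynomial ring $\R[\mathbf{x}]$ indexed by $V(G)$ (with the variables for $V(G \setminus K)$ sitting as a subset). Once this is in place, the corollary reduces to a one-line cancellation in the UFD $\R[\mathbf{x}]$.
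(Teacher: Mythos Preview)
Your proposal is correct and follows essentially the same route as the paper's proof: both rewrite the theorem's identity, factor $I(T\setminus K)=\prod_{v\in K} I(T^\boxtimes_{J_v}(G\setminus K))$ from the recursive definition of the clique tree, and apply the inductive hypothesis (via the Chudnovsky--Seymour lemma that each $J_v$ is simplicial in $G\setminus K$) to see that $I(G\setminus K)$ divides each factor. Your explicit UFD cancellation is a slightly more careful phrasing of what the paper states as ``the right hand side is a polynomial.''
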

\begin{proof}
We have seen that $G \setminus K$ is a simplicial graph. The previous theorem can be written as:
$$\frac{I(T^\boxtimes_K(G))}{I(G)} = \frac{I(T^\boxtimes_K(G) \setminus K)}{I(G \setminus K)} = \frac{\prod_{v \in K} I(T^\boxtimes_{N[v]\setminus K}(G\setminus K))}{I(G \setminus K)}$$
Then since $N[v] \setminus K$ is a simplicial clique in $G \setminus K$, by induction we have the denominator divides any term in the numerator, so the right hand side is a polynomial, as desired.
\end{proof}

\begin{corollary}
Given a simplicial graph $G$, we have that $\lambda_1(G) \leq \frac{-1}{4(\omega - 1)}$.
\end{corollary}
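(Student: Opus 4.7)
The strategy is to invoke the divisibility theorem just proved, transport the problem to the matching polynomial of a tree, and then apply the classical Heilmann--Lieb bound. By the preceding corollary, $I(G)$ divides $I(T^\boxtimes_K(G))$ for any simplicial clique $K \leq G$, so every root of the univariate $I(G)$ is a root of the univariate $I(T^\boxtimes_K(G))$. By Theorem \ref{thm:diagram} we have $T^\boxtimes_K(G) \cong L(T')$, where $T' := T^\angle_K(G)$ is a tree, and the identity $\mu_E(H) = I(L(H))$ established earlier then gives $I(T^\boxtimes_K(G)) = \mu_E(T')$.

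Next, I apply the Heilmann--Lieb root bound $|\lambda| \geq \frac{1}{2\sqrt{\Delta(T')-1}}$ to the univariate vertex matching polynomial $\mu_V(T')$. Under the paper's inverted conventions, $\mu_V(T')(x) = \mu_E(T')(-x^2)$ in the univariate case, so the substitution $y = -x^2$ converts this bound into the statement that every real root $\lambda$ of $\mu_E(T')$ is nonpositive and satisfies $\lambda \leq -\frac{1}{4(\Delta(T')-1)}$. Combined with the divisibility above, every root of $I(G)$ satisfies the same inequality.

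Finally, I replace $\Delta(T')$ with $\omega(G)$. Since for any tree the maximum clique of its line graph equals its maximum degree (in a triangle-free graph, pairwise adjacent edges must share a common vertex), we have $\Delta(T') = \omega(L(T')) = \omega(T^\boxtimes_K(G))$. The graph homomorphism $\phi_K : T^\boxtimes_K(G) \to G$ sends any $k$-clique to a $k$-clique, because the images of two distinct adjacent vertices remain distinct and adjacent in a simple graph; hence $\omega(T^\boxtimes_K(G)) \leq \omega(G)$. Chaining the inequalities gives $\lambda_1(G) \leq -\frac{1}{4(\omega(G)-1)}$. The only point requiring care is the bookkeeping across the inverted univariate conventions for $\mu_V$, $\mu_E$, and $I$; once the divisibility theorem and Theorem \ref{thm:diagram} are in place, no further combinatorial or analytic input is required.
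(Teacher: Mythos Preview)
Your proof is correct and follows essentially the same route as the paper: divisibility reduces to $T^\boxtimes_K(G)$, the diagram identifies this with $L(T^\angle_K(G))$, and the Heilmann--Lieb bound for the tree's matching polynomial finishes it. The one place you go further is in justifying $\Delta(T^\angle_K(G)) \leq \omega(G)$: the paper simply asserts that the maximum degree of $T^\angle_K(G)$ is $\omega$, whereas you derive it cleanly via $\Delta(T') = \omega(L(T')) = \omega(T^\boxtimes_K(G))$ together with the observation that the labeling homomorphism $\phi_K$ into a simple graph must be injective on cliques.
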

\begin{proof}
By the previous corollary we have $\lambda_1(G) \leq \lambda_1(T^\boxtimes_K(G))$. Then by the commutativity of the diagram, we have seen $T^\boxtimes_K(G) = L(T^\angle_K(G))$. Hence we have $\lambda_1(T^\boxtimes_K(G)) \leq \frac{-1}{4(\omega - 1)}$ is equivalent to the identical root bound on $\mu_E(T^\angle_K(G))$. Godsil provides bounds on this root by relating the matching polynomial of a tree to its characteristic polynomial, and then bounding the roots of the characteristic polynomial by its maximal degree $\Delta$. Since the maximum degree of the vertices in $T^\angle_K(G)$ is $\omega$, we get our desired bound.
\end{proof}

\begin{remark}
In their original paper, Heilmann and Lieb prove a root bound for weighted matching polynomials, where one puts weights on the vertices. Since the previous corollary works in the multivariate case, one could use this framework to derive similar results for weighted independence polynomials.
\end{remark}

\subsection{Other Bound on $\lambda_1$}

Briefly we mention some easy lower bounds on $\lambda_1(G)$. In what follows we let $G$ be any graph. First we note how modifying our graph by removing edges or removing vertices affects $\lambda_1(G)$.

\begin{proposition} \label{prop:subgraph_rootbound}
Let $G$ be any graph, $v$ a vertex in that graph, and $e = \{u,w\}$ an edge in the graph.
\begin{enumerate}
    \item $\lambda_1(G \setminus v) \leq \lambda_1(G)$
    \item $\lambda_1(G \setminus e) \leq \lambda_1(G)$
\end{enumerate}
\end{proposition}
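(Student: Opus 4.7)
The plan is to prove (1) and (2) simultaneously by strong induction on $|V(G)| + |E(G)|$, interpreting $\lambda_1(H)$ as the largest negative real root of the univariate $I(H)$ (with the convention $\lambda_1(H) := -\infty$ if no such root exists), so that $I(H, x) > 0$ for every $x \in (\lambda_1(H), 0]$. The key tools are the two standard deletion recursions
\[
I(G, x) = I(G \setminus v, x) + x \cdot I(G \setminus N[v], x),
\]
\[
I(G \setminus e, x) = I(G, x) + x^2 \cdot I(G \setminus (N[u] \cup N[w]), x),
\]
for $e = \{u,w\}$, both obtained by partitioning the independent sets according to whether they contain $v$, respectively contain both $u$ and $w$. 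The base case of an empty graph is vacuous.

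For part (1), I would assume for contradiction that $\lambda_1(G \setminus v) > \lambda_1(G)$ and set $x_0 := \lambda_1(G \setminus v) \in (\lambda_1(G), 0)$. Evaluating the first recursion at $x_0$ and using $I(G \setminus v, x_0) = 0$ yields $I(G, x_0) = x_0 \cdot I(G \setminus N[v], x_0)$; since $x_0 > \lambda_1(G)$ we have $I(G, x_0) > 0$, and $x_0 < 0$ then forces $I(G \setminus N[v], x_0) < 0$. On the other hand, $G \setminus v$ is strictly smaller than $G$ in the induction parameter, so the inductive hypothesis for part (1) applies to it; iterating part (1) to remove the neighbors of $v$ one by one (each intermediate graph remains strictly smaller than $G$) yields $\lambda_1(G \setminus N[v]) \leq \lambda_1(G \setminus v) = x_0$, which gives $I(G \setminus N[v], x_0) \geq 0$ — a contradiction.

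Part (2) proceeds analogously from the edge recursion. Assuming $\lambda_1(G \setminus e) > \lambda_1(G)$ and setting $x_0 := \lambda_1(G \setminus e)$, the identity $I(G \setminus e, x_0) = 0$ combined with the recursion gives $I(G, x_0) = -x_0^2 \cdot I(G \setminus (N[u] \cup N[w]), x_0)$, so the positivity of $I(G, x_0)$ and of $x_0^2$ force $I(G \setminus (N[u] \cup N[w]), x_0) < 0$. Since $G \setminus e$ has one fewer edge than $G$, the inductive hypothesis applies to $G \setminus e$; iterating part (1) to strip away the vertices of $N[u] \cup N[w]$ from $G \setminus e$ produces $\lambda_1(G \setminus (N[u] \cup N[w])) \leq \lambda_1(G \setminus e) = x_0$, giving the analogous sign contradiction.

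The main obstacle, and the reason for the choice of induction parameter, is that removing an edge does not decrease $|V(G)|$, so a plain induction on the vertex count fails for part (2); using $|V(G)| + |E(G)|$ is what lets both parts be handled uniformly. A minor technical point is the edge case where $\lambda_1(G \setminus N[v])$ (or its part-(2) analogue) equals $x_0$ rather than being strictly smaller; then $I(\cdot, x_0) = 0$, which pushed back through the recursion would give $I(G, x_0) = 0$, still contradicting $x_0 > \lambda_1(G)$.
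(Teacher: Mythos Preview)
Your proof is correct and follows essentially the same route as the paper: both arguments rest on the two deletion recurrences and on the inductive fact that removing vertices can only push $\lambda_1$ down, which is used to control the sign of the residual term $I(G\setminus N[v])$ (resp.\ $I(G\setminus(N[u]\cup N[w]))$) at $x_0$. The paper frames the inequality via the intermediate value theorem (showing $I(G,\lambda_1(G\setminus v))\le 0$) rather than by contradiction, and it proves part~(1) by induction on $|V(G)|$ alone and then deduces part~(2) directly from part~(1)---so the combined induction on $|V|+|E|$ is not actually needed, since your argument for~(2) only invokes part~(1) on induced subgraphs of $G\setminus e$; but this is an organizational rather than a mathematical difference.
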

\begin{proof}
To prove these we need the following recurrences:
$$I(G) = I(G \setminus v) + x I(G \setminus N[v])$$
$$I(G) = I(G \setminus e) - x^2 I(G \setminus (N[u] \cup N[w]))$$

To prove the first statement we prove the following statement by induction: Given any $H \subset V(G)$, we have $I(G \setminus H)$ is nonnegative on the interval $[\lambda_1(G), \infty)$. If $G \setminus H$ is not the empty graph, then $I(G \setminus H)$ is not the zero polynomial so this implies that $\lambda_1(G \setminus H) \leq \lambda_1(G)$. If $G \setminus H$ is the empty graph it is trivially true. 

For $|V(G)| = 1$, it is easily checked to be true. Assuming this to be true for $|V(G)| \leq n - 1$, let $G$ be a graph with $|V(G)| = n$. Then if $H = G$, we noted this is trivially true. Then it suffices to show that $\lambda_1(G \setminus v) \leq \lambda_1(G)$. By induction we know $I(G \setminus N[v])$ is nonnegative on $[\lambda_1(G \setminus v), \infty)$. Then we know $x I(G \setminus N[v])$ is nonpositive on $[\lambda_1(G \setminus v), 0)$ (all the roots of independence polynomials are negative).  By the recurrence relation, $I(G)$ at $\lambda_1(G \setminus v)$ is nonpositive, so by the intermediate value theorem $I(G)$ has a root in $[\lambda_1(G \setminus v), 0)$, as desired.

To prove the second claim, since $G \setminus (N[u] \cup N[w])$ is a induced subgraph of $G \setminus e$, we have $I(G \setminus (N[u] \cup N[w]))$ is nonnegative on $[\lambda_1(G \setminus e), \infty)$. By the recurrence, we have that $I(G)$ evaluated at $\lambda_1(G \setminus e)$ is nonpositive, and so by the intermediate value theorem we see $\lambda_1(G \setminus e) \leq \lambda_1(G)$.
\end{proof}

Using this we can get the following simple lower bound on $\lambda_1$:

\begin{proposition}
$\frac{-1}{\omega} \leq \lambda_1(G)$
\end{proposition}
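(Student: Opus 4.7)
The plan is to reduce to a clique of maximum size and compute the root directly. Let $K \leq G$ be an induced clique of size $\omega$ (which exists by definition of the clique number). The key idea is to apply part (1) of the preceding proposition iteratively: starting from $G$, remove one vertex at a time until only the vertices of $K$ remain. Each vertex removal preserves the property that the root $\lambda_1$ of the resulting graph is at most that of the previous graph, i.e., the monotonicity inequality $\lambda_1(G \setminus v) \leq \lambda_1(G)$ chains through induced subgraph reductions. After at most $|V(G)| - \omega$ removals, we are left with the induced subgraph $K \cong K_\omega$, and therefore $\lambda_1(K_\omega) \leq \lambda_1(G)$.

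Next I would directly compute $I(K_\omega)$. Since any independent set in a complete graph is either empty or a single vertex, we have $I(K_\omega)(x) = 1 + \omega x$, whose unique root is $-1/\omega$. Hence $\lambda_1(K_\omega) = -1/\omega$, and combining with the previous inequality gives $-1/\omega \leq \lambda_1(G)$, which is the desired bound.

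There is no significant obstacle here: the proof is essentially a two-line argument once the monotonicity proposition is in hand. The only small subtlety worth checking is that the iterative removal indeed produces the induced clique $K$ (not just an arbitrary subgraph), which is immediate because $K$ is already an induced subgraph of $G$, so removing the vertices in $V(G) \setminus V(K)$ in any order yields $K$ as the final induced subgraph. One could also phrase this more cleanly as a single lemma: ``if $H$ is an induced subgraph of $G$, then $\lambda_1(H) \leq \lambda_1(G)$,'' which follows by straightforward induction on $|V(G)| - |V(H)|$ from part (1) of the preceding proposition, and then apply it with $H = K_\omega$.
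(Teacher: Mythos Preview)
Your proof is correct and follows essentially the same approach as the paper: take a maximum clique $K_\omega$, use the vertex-removal monotonicity from the preceding proposition to get $\lambda_1(K_\omega) \leq \lambda_1(G)$, and compute $I(K_\omega) = 1 + \omega x$ to conclude. The only cosmetic difference is that you spell out the iteration of single-vertex removals explicitly, whereas the paper invokes the induced-subgraph monotonicity (which was in fact established inside the proof of the preceding proposition) in one step.
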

\begin{proof}
Let $K_\omega \leq G$ be the largest clique in $G$. Then by our previous proposition we have $\lambda_1(K_\omega) \leq \lambda_1(G)$. We have $I(K_\omega) = 1 + \omega x$, so $\lambda_1(K_\omega) = \frac{-1}{\omega}$.
\end{proof}

These results hold for all graphs, but combining these with our previous results for simplicial graphs $G$, we see:
$$\frac{-1}{\omega} \leq \lambda_1(G) \leq \frac{-1}{4(\omega-1)}$$

\section{Failure of the Root Bounds}

Recall we have the following inclusions of types of graphs:
\[
\{ \text{Line Graphs} \} \subset \{ \text{Simplicial Graphs} \} \subset \{\text{Claw-Free Graphs}\}
\]

The root bounds for the matching polynomial carry over to the independence polynomial for line graphs. And by extending the proof method of Godsil, we demonstrated the equivalent root bounds for simplicial graphs. The natural next question is: how general can the graphs get before the root bound fails?

In what follows we provide a claw-free graph (which is not simplicial) for which the root bound fails. We then provide a much weaker root bound for claw-free graphs. It is unknown whether this weaker root bound is tight due to our lack of examples of claw-free graphs which are not simplicial.

\subsection{Schläfli Graph}

The Schläfli graph is the unique strongly regular graph with parameters 27, 16, 10, 8. It is the complement of the Clebsch graph, the intersection graph of the 27 lines on a cubic surface. The Clebsch graph is triangle free, and hence the Schläfli graph is claw-free. We refer the reader to \cite{schlaf} for a comprehensive reference on the Schläfli graph and related graphs.

Keeping in mind that our root bound is equivalent to the statement $\lambda_1(G) \cdot 4 \cdot (\omega-1) \leq -1$, we calculate the following.

\begin{lemma}
We have the following:
\begin{enumerate}[(i)]
    \item The independence polynomial of the Schläfli graph is $45t^3 + 135t^2 + 27t + 1$.
    \item The clique size of the Schläfli graph is 6.
    \item $\lambda_1(\text{Schläfli graph}) \cdot 4 \cdot (\omega -1) > -1$
\end{enumerate}
\end{lemma}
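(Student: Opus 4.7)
The three parts are of increasing analytical difficulty but all reduce to elementary computations once we exploit the strongly regular structure $(v,k,\lambda,\mu) = (27, 16, 10, 8)$ of the Schläfli graph.

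For (i), the plan is to count independent sets by size. The constant and linear coefficients are $1$ and $27$ by inspection. The quadratic coefficient is the number of non-edges, namely $\binom{27}{2} - 27 \cdot 16/2 = 351 - 216 = 135$. For the cubic coefficient, the key observation is that for any non-adjacent pair $\{u,v\}$, the set of common non-neighbors has size $27 - 2 - |N(u) \cup N(v)| = 27 - 2 - (16 + 16 - \mu) = 1$, using $\mu = 8$. Thus each of the $135$ non-edges extends to exactly one independent triple, giving $135/3 = 45$ such triples. The same computation rules out independent $4$-sets: a putative independent quadruple would extend some non-edge to two distinct independent triples, contradicting the uniqueness just established. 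Hence the polynomial has degree exactly $3$ with coefficients as claimed.

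For (ii), I would appeal to the classical description of the Schläfli graph as the graph whose vertices are the $27$ lines on a cubic surface with edges between skew pairs. The maximum cardinality of a set of pairwise skew lines is $6$ (the ``double six'' configuration), giving $\omega = 6$. This is a standard geometric fact and can also be taken from the reference \cite{schlaf}. Note that the Hoffman--Delsarte eigenvalue bound $\omega \leq 1 - k/\theta_{\min} = 1 - 16/(-2) = 9$ is not tight here, so a purely spectral argument does not suffice; this is the step that most relies on external input and is the main obstacle.

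For (iii), with $I(t) = 45t^3 + 135t^2 + 27t + 1$ from (i) and $\omega = 6$ from (ii), the claim $\lambda_1 \cdot 4(\omega-1) > -1$ becomes $\lambda_1 > -1/20$, where $\lambda_1$ is the root of $I$ closest to zero. Since the Schläfli graph is claw-free, the theorem of the previous section ensures $I$ is same-phase stable, and evaluation along the all-ones direction yields a real-rooted univariate polynomial with all roots negative. Direct substitution gives $I(-1/20) = -29/1600 < 0$, while $I(0) = 1 > 0$; by the intermediate value theorem the smallest-modulus root lies in $(-1/20, 0)$, so $\lambda_1 > -1/20$ and multiplication by $20$ delivers the strict inequality. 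All remaining steps are routine bookkeeping.
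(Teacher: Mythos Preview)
Your proposal is correct, and for part (iii) it coincides exactly with the paper's argument (evaluate $I(-1/20) = -29/1600 < 0$ and $I(0) = 1 > 0$, then invoke the intermediate value theorem). For parts (i) and (ii), however, the paper simply defers to a computer algebra system (Sage), whereas you give genuine hand arguments. Your derivation of the coefficients in (i) from the strongly regular parameters $(27,16,10,8)$ is more illuminating than a machine check: the count of independent pairs as non-edges and the observation that each non-edge has a \emph{unique} common non-neighbor (forcing exactly $135/3 = 45$ triangles in the complement and ruling out independent $4$-sets) explains \emph{why} the polynomial has the shape it does, not merely that it does. For (ii) you trade one external input (Sage) for another (the classical double-six configuration on a cubic surface), which is a fair substitution and arguably more in the spirit of a written proof; you are right that the spectral bound is not sharp here, so some structural input is unavoidable.
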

\begin{proof}
One can calculate the independence polynomial and clique size using any computer algebra system; we used Sage.

To show our graph breaks the root bound it suffices to show that $I(G)(t/20)$ has a root in $(-1,0)$. In fact we can easily calculate that $I(G)(-1/20) = -29/1600$ while $I(G)(0) = 1$, so there is a root in $(-1, 0)$.
\end{proof}

\subsection{Weaker Root Bounds for Claw-free Graphs}
Given any claw-free graph $G$, we can introduce a simplicial clique by modifying the graph as follows:

\begin{lemma}
    Let $G$ be a claw-free graph. Given any vertex $v \in G$, we can form a new graph $S_v(G)$ by connecting all of $N[v]$ together to form a clique. Then, $S_v(G)$ is claw-free and $\{v\}$ is a simplicial clique in $S_v(G)$.
\end{lemma}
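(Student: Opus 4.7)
The proof has two parts, and the substantive one is claw-freeness. I sketch both.

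\textbf{Simplicial clique part.} The plan is to unpack the definitions. In $S_v(G)$ we have added edges only inside $N_G[v]$, and since $v$ was already adjacent in $G$ to every element of $N_G(v)$, we did not add any edge incident to $v$ itself. Hence $N_{S_v(G)}(v) = N_G(v)$. By construction, every pair of vertices in $N_G[v]$ is adjacent in $S_v(G)$, so $N_{S_v(G)}(v) = N_G(v)$ is a clique in $S_v(G)$. Taking $K = \{v\}$ in the definition of simplicial clique, the only condition to check is that $N_{S_v(G)}[v] \cap (S_v(G) \setminus \{v\}) = N_G(v)$ is a clique, which is exactly what we just showed.

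\textbf{Claw-freeness: setup.} For the harder part, I would argue by contradiction. Suppose $S_v(G)$ contains an induced claw with center $c$ and leaves $a_1, a_2, a_3$. Two simple observations organize the case analysis. First, $S_v(G) \supseteq G$, so the non-adjacencies among $a_1, a_2, a_3$ already hold in $G$. Second, the edges added to form $S_v(G)$ all lie inside $N_G[v]$; in particular, any edge of $S_v(G)$ with at least one endpoint outside $N_G[v]$ is already an edge of $G$. A third observation will do most of the work: since $N_G[v]$ is a clique in $S_v(G)$, at most one of the three leaves $a_i$ can lie in $N_G[v]$ (otherwise two leaves would be adjacent).

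\textbf{Claw-freeness: case analysis.} I would then split on how many claw-vertices lie in $N_G[v]$. If no leaf lies in $N_G[v]$, every center-to-leaf edge has an endpoint outside $N_G[v]$, so all three such edges are original edges of $G$, producing an induced claw in $G$ and contradicting the hypothesis. If exactly one leaf, say $a_1$, lies in $N_G[v]$, the edges $\{c, a_2\}$ and $\{c, a_3\}$ are again forced to be edges of $G$ (since $a_2, a_3 \notin N_G[v]$), while the potentially new edge $\{c, a_1\}$ is the troublesome one. The key trick is to replace the offending leaf $a_1$ by $v$ itself: check that $c \in N_G[v]$ must hold (otherwise $\{c, a_1\}$ would not be a new edge and we would already be in $G$), rule out $c = v$ (which would force all three leaves into $N_G(v)$), so $c \in N_G(v)$ and $\{c, v\}$ is an edge of $G$. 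Finally, $v \notin N_G[a_2] \cup N_G[a_3]$ because $a_2, a_3 \notin N_G[v]$, so $v$ is non-adjacent to $a_2$ and $a_3$ in $G$; combined with the already established non-adjacency of $a_2$ and $a_3$, the vertex $c$ is the center of an induced claw in $G$ with leaves $v, a_2, a_3$, contradicting claw-freeness of $G$. The sub-case $a_1 = v$ is handled identically (and is in fact slightly easier).

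\textbf{Main obstacle.} The main subtlety is not a calculation but the bookkeeping: one must be careful that the ``replacement'' leaf $v$ really does produce a claw, i.e., that $v$ is adjacent to $c$ in $G$ but non-adjacent to the other two leaves in $G$. The former follows because only edges inside $N_G[v]$ are new, which forces $c \in N_G(v)$; the latter follows because the other two leaves cannot be in $N_G[v]$ without violating the ``at most one leaf in $N_G[v]$'' observation. Once this is set up cleanly, the case analysis is short.
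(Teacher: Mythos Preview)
Your proposal is correct and follows essentially the same approach as the paper: after observing that at most one leaf of a putative claw can lie in $N_G[v]$, you replace that leaf by $v$ to exhibit an induced claw already present in $G$. The paper's proof is terser (it starts directly from ``an added edge creates a claw'' and deduces the structure), but the key replacement trick and the underlying case analysis are identical.
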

\begin{proof}
It is clear that $\{v\}$ will be a simplicial clique in $S_v(G)$. To see that $S_v(G)$ is claw-free, suppose one of the added edges creates a claw. Then we have $u,w \in N(v)$ and a claw with some $u$ as the internal node and $w$ as a leaf. Since we have connected all of the neighbors of $v$ together, we must have the other two leaves of the claw outside of $N[v]$. However these two vertices therefore are not connected to $v$ or each other, and hence form a claw with $u$ as the internal node and $v$ as the other leaf. This provides a contradiction since $G$ is claw-free.
\end{proof}

When analyzing the clique tree of $S_v(G)$ starting at the newly formed simplicial clique $\{v\}$, we notice that the first rung of the clique tree is $\{v\}$, the second rung is $N(v)$, and beyond that are clique trees that live in $G \setminus N[v]$. This observation immediately yields the following:

\begin{proposition}
Given any claw-free graph $G$ and a vertex $v \in G$, we have:
$$\lambda_1(S_v(G)) \leq \frac{-1}{4 \cdot \max\{\omega - 1, \text{deg}(v)\}}$$
This yields the following root bound for $G$:
$$\lambda_1(G) \leq \frac{-1}{4 \cdot \max\{\omega - 1, \delta\}}$$
\end{proposition}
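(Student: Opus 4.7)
The plan is to leverage the simplicial-graph root bound machinery on $S_v(G)$ and then pull it back to $G$ via the edge-addition monotonicity established earlier. By the preceding lemma, $\{v\}$ is a simplicial clique in the claw-free graph $S_v(G)$, so $S_v(G)$ is simplicial. The divisibility corollary for simplicial graphs then yields $\lambda_1(S_v(G)) \leq \lambda_1(T^\boxtimes_{\{v\}}(S_v(G)))$. By the commutativity of Theorem \ref{thm:diagram} together with the identity $I(L(T)) = \mu_E(T)$, this reduces to bounding $\lambda_1(\mu_E(T))$ for $T := T^\angle_{\{v\}}(S_v(G))$, and Godsil's tree bound gives $\lambda_1(\mu_E(T)) \leq -1/(4(\Delta(T) - 1))$, where $\Delta(T)$ is the maximum degree of $T$.

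The real content is the computation of $\Delta(T)$. The construction appends a root $*$ adjacent to $v$, so $*$ has tree-degree $1$ and $v$ has tree-degree $1 + \deg_G(v)$. Since the edges added in forming $S_v(G)$ all lie within $N_G[v]$, the induced path tree beyond $v$ lives entirely in the unchanged subgraph $G \setminus N_G[v]$; for $u \in N_G(v)$, the children of $u$ in $T$ are exactly $N_G(u) \setminus N_G[v]$. Claw-freeness of $G$ then forces $\{u\} \cup (N_G(u) \setminus N_G[v])$ to be a clique in $G$: if $y, z$ in this set were non-adjacent in $G$, then $u, v, y, z$ would form a claw centered at $u$ (since $y, z \notin N_G[v]$ makes them non-neighbors of $v$, while both are neighbors of $u$, which is also a neighbor of $v$). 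The identical claw-free argument iterates at deeper levels of $T$, so every vertex at depth $\geq 2$ has tree-degree at most $\omega(G)$. Hence $\Delta(T) = \max(\omega(G),\, \deg(v) + 1)$, yielding $\lambda_1(S_v(G)) \leq -1/(4\max(\omega - 1,\, \deg(v)))$.

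For the corollary on $G$ itself, I apply the preceding proposition's edge-addition monotonicity $\lambda_1(G \setminus e) \leq \lambda_1(G)$ once per added edge to obtain $\lambda_1(G) \leq \lambda_1(S_v(G))$; choosing a vertex $v$ of minimum degree $\delta$ then completes the proof. The main subtlety I expect to watch for is that the bound uses $\omega(G)$ rather than $\omega(S_v(G))$ (the latter can be strictly larger: adding edges to make $N_G[v]$ a clique may create new cliques extending outside $N_G[v]$ through common neighbors). The reason $\omega(G)$ is the right quantity is precisely that the induced-path-tree recursion beyond the first two levels lives in $G \setminus N_G[v]$, so the clique sizes relevant to the degree bound are invariants of $G$, not of $S_v(G)$.
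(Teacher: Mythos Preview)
Your proposal is correct and follows essentially the same route as the paper: apply the simplicial root-bound machinery to $S_v(G)$, observe that beyond the first two levels the recursion lives in $G\setminus N_G[v]$ so the relevant clique sizes are bounded by $\omega(G)$ rather than $\omega(S_v(G))$, and then pull back to $G$ via edge-addition monotonicity with $v$ chosen of minimum degree. The only cosmetic difference is that the paper phrases the degree computation in terms of the clique tree $T^\boxtimes_{\{v\}}(S_v(G))$ (first rung $\{v\}$, second rung $N(v)$, subsequent rungs in $G\setminus N[v]$), whereas you work through the equivalent induced path tree $T^\angle_{\{v\}}(S_v(G))$; your explicit claw argument showing $\{u\}\cup(N_G(u)\setminus N_G[v])$ is a clique supplies a detail the paper leaves implicit.
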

\begin{proof}
By Proposition \ref{prop:subgraph_rootbound}, we have $\lambda_1(G) \leq \lambda_1(S_v(G))$. To optimize the bound we pick the vertex $v$ which has minimal degree in the graph, $\delta$. 
\end{proof}

In the Schläfli graph we have a large gap between the clique size of $6$ and minimal degree of $16$. We think that other non-simplicial claw-free graphs with a large gap between clique size and minimal degree may provide good candidates for studying this root bound. Further, finding a family of graphs which require this looser bound could assist in showing how optimal this bound is for non-simplicial claw-free graphs.

\section{Other Remarks}

Above, we presented independence polynomials analogues to the real-rootedness (subsequently real stability) and the root bounds of the matching polynomial. We expect other results about the matching polynomial to be generalizable to the independence polynomial. In what follows we list a few examples and comment on these.

In \cite{rootbounds}, Fisher and Solow remark that $I(G)^{-1}$ can be viewed as a generating function which enumerates the number of $n$ letter words, where the letters are the vertices of the graph and two letters commute iff they have an edge between them on the graph. Similarly in \cite{godsil}, Godsil shows that $\frac{x (x^{2n}\mu_V(G,x^{-1}))'}{x^{2n}\mu_V(G,x^{-1})}$ is a generating function in $x^{-1}$ for closed tree-like walks in $G$. We believe that there is a multivariate generalization of Fisher and Solow's remark by working in the ring $\mathbb{Z}[x_1, \ldots, x_n]$ where variables commute if and only if they correspond to vertices in the graph $G$ which share an edge. Godsil's tree-like result should be a combinatorial consequence of the more general Fisher and Solow result.

In a previous paper of Bencs, Christoffel-Darboux like identities are established for the independence polynomial \cite{CDidentities}. One can similarly establish multivariate generalizations of these identities. By generalizing in this way, one can give a single identity that implies all the others through simple multivariate operations.

Another area of interest is studying independent sets in hypergraphs. One can naturally define the multivariate independence polynomial of a hypergraph. Namely given a hyper graph $G = (V, E)$ a set $S \subset V$ is independent if $e \not\subset S$ for all edges $e \in E$. If two edges are comparible in $G$ ($e \subset f$), then we note that by removing $f$ from the edge set we do not change the independent sets of $G$. If $G$ contains any edges of size one, then that vertex never shows up in the independence polynomial so we can further reduce $G$ by removing that vertex. Thus we can do this to obtain the reduction, $\tilde{G}$, of $G$ which has the same multivariate independence polynomial and has no comparable edges and no edges of size $1$.
\begin{proposition}
Given a hypergraph $G$, $I(G,x)$ is same-phase stable if and only if $\tilde{G}$ is a $2$-uniform claw-free graph.
\end{proposition}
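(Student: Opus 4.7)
The plan is to prove the two directions separately, with the backward direction reducing immediately to the graph-theoretic corollary already established, and the forward direction reducing the hypergraph case to the graph case by first showing that every edge of $\tilde{G}$ must have size $2$.

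For the backward direction, if $\tilde{G}$ is a $2$-uniform claw-free graph, then by the earlier corollary (same-phase stability of the independence polynomial characterizes claw-freeness), $I(\tilde{G},x)$ is same-phase stable. Since the reduction $G \rightsquigarrow \tilde{G}$ preserves the multivariate independence polynomial (up to the trivial variables killed by size-$1$ edges), this gives $I(G,x)$ same-phase stable as well.

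For the forward direction, assume $I(G,x)$ is same-phase stable; then so is $I(\tilde{G},x)$. I will first show that every edge of $\tilde{G}$ has size exactly $2$. Suppose for contradiction that $e = \{v_1,\ldots,v_k\}$ is an edge of $\tilde{G}$ with $k \geq 3$. Applying the variable deselection closure property for multi-affine same-phase stable polynomials, set $x_v = 0$ for every $v \notin e$. Because $\tilde{G}$ has no comparable edges, no edge of $\tilde{G}$ is a proper subset of $e$, and any edge contained in $\{v_1,\ldots,v_k\}$ must equal $e$ itself. Consequently the only non-independent subset of $\{v_1,\ldots,v_k\}$ is $e$ itself, so the restricted polynomial is
\[
p(x_{v_1},\ldots,x_{v_k}) \;=\; \prod_{i=1}^k (1 + x_{v_i}) \;-\; \prod_{i=1}^k x_{v_i}.
\]
Same-phase stability of $p$ forces $p(\mathbf{1}\cdot x) = (1+x)^k - x^k$ to be real-rooted. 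But its nonzero roots are $x = 1/(\omega-1)$ as $\omega$ ranges over the $(k-1)$ nontrivial $k$-th roots of unity, and for $k \geq 3$ at least one such $\omega$ is non-real, making $x$ non-real. This contradiction shows that $\tilde{G}$ has no edges of size $\geq 3$; combined with the fact that $\tilde{G}$ has no edges of size $1$ by construction, every edge has size exactly $2$, so $\tilde{G}$ is $2$-uniform.

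Finally, since $\tilde{G}$ is now an ordinary simple graph with same-phase stable independence polynomial, the earlier corollary (stated for graphs) implies that $\tilde{G}$ is claw-free. The main obstacle is the $2$-uniformity step: one must verify carefully that the deselection indeed produces the clean polynomial $\prod(1+x_i) - \prod x_i$, which in turn relies essentially on the ``no comparable edges'' property of $\tilde{G}$. Once this is in hand, the rest of the argument is purely bookkeeping plus an appeal to the already-established graph case.
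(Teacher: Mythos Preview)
Your proof is correct and follows essentially the same route as the paper: use variable deselection together with the no-comparable-edges property of $\tilde{G}$ to reduce to the polynomial $\prod_i(1+x_{v_i}) - \prod_i x_{v_i}$, then diagonalize to $(1+x)^k - x^k$ and derive a contradiction for $k \geq 3$. The only cosmetic difference is in how non-real-rootedness of $(1+x)^k - x^k$ is verified: you compute the roots $x = 1/(\omega-1)$ directly, while the paper instead differentiates down to the quadratic $(1+x)^3 - x^3 = 1 + 3x + 3x^2$ and observes its discriminant is negative.
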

\begin{proof}
As noted, $I(G,x) = I(\tilde{G},x)$, so if $\tilde{G}$ is $2$-uniform and claw-free we see $I(G,x)$ is same-phase stable by previous results. If $\tilde{G}$ is not $2$-uniform, then we have some edge $e$ with $|e| > 2$. If $I(\tilde{G},x)$ were same-phase stable then we could restrict to the subgraph of vertices in $e$ and obtain a same-phase stable independence polynomial. Since no other edges are comparable to $e$ by construction of $\tilde{G}$, we have this subgraph only contains the edge $e$. Then we can diagonalize to get the independence polynomial $(1+x)^n - x^n$. If $I(\tilde{G},x)$ were same-phase stable, this polynomial would be real rooted. However this would imply that its derivatives were real rooted, namely $(1+x)^3 - x^3 = 1 + 3x + 3x^2$ would be real rooted, a contradiction.
\end{proof}

\bibliographystyle{amsalpha}
\bibliography{indep_poly}

\end{document}